\newtheorem{thm}{Theorem}[section]
\newtheorem{prop}[thm]{Proposition}
\newtheorem{lem}[thm]{Lemma}
\numberwithin{equation}{section}
\theoremstyle{definition}
\newtheorem{remark}[thm]{Remark}
\DeclareFontFamily{U}{mathc}{}
\DeclareFontShape{U}{mathc}{m}{it}%
{<->s*[1.03] mathc10}{}
\DeclareMathAlphabet{\mathcal}{U}{mathc}{m}{it}
\newcommand{\kend}{\mathcal{E\mkern-3mu nd}}
\newcommand{\ch}{{\rm ch}}
\newcommand{\Br}{{\rm Br}}
\newcommand{\SBr}{{\rm SBr}}
\newcommand{\SBro}{{\rm SBr}^{\rm o}}
\newcommand{\per}{{\rm per}}
\newcommand{\ind}{{\rm ind}}
\newcommand{\CH}{{\rm CH}}
\newcommand{\NS}{{\rm NS}}
\newcommand{\Pic}{{\rm Pic}}
\newcommand{\rk}{{\rm rk}}
\newcommand{\cal}{\mathcal}
\newcommand{\ka}{{\cal A}}
\newcommand{\kc}{{\cal C}}
\newcommand{\kl}{{\cal L}}
\newcommand{\ko}{{\cal O}}
\newcommand{\kp}{{\cal P}}
\newcommand{\GG}{\mathbb{G}}
\newcommand{\ZZ}{\mathbb{Z}}
\newcommand{\QQ}{\mathbb{Q}}
\newcommand{\CC}{\mathbb{C}}
\newcommand{\PP}{\mathbb{P}}
\DeclareSymbolFont{cyrletters}{OT2}{wncyr}{m}{n}
\DeclareMathSymbol{\Sha}{\mathalpha}{cyrletters}{"58}
\renewcommand{\to}{\xymatrix@1@=15pt{\ar[r]&}}
\newcommand{\lto}{\xymatrix@1@=15pt{&\ar[l]}}
\renewcommand{\rightarrow}{\xymatrix@1@=15pt{\ar[r]&}}
\renewcommand{\mapsto}{\xymatrix@1@=15pt{\ar@{|->}[r]&}}
\newcommand{\mapslto}{\xymatrix@1@=15pt{&\ar@{|->}[l]&}}
\renewcommand{\twoheadrightarrow}{\xymatrix@1@=18pt{\ar@{->>}[r]&}}
\newcommand{\twoto}{\twoheadrightarrow}
\renewcommand{\hookrightarrow}{\xymatrix@1@=15pt{\ar@{^(->}[r]&}}
\newcommand{\hook}{\xymatrix@1@=15pt{\ar@{^(->}[r]&}}
\newcommand{\congpf}{\xymatrix@1@=15pt{\ar[r]^-\sim&}}
\renewcommand{\cong}{\simeq}
\newcommand{\TBC}[1]{}
\def\blfootnote{\xdef\@thefnmark{}\@footnotetext}
\begin{document}

\title[Splitting Brauer classes and the period--index problem]{Splitting unramified Brauer classes by abelian torsors and the period--index problem}

\author[D.\ Huybrechts, D.\ Mattei]{Daniel Huybrechts and Dominique Mattei}

\address{Mathematisches Institut \& Hausdorff Center for Mathematics,
Universit{\"a}t Bonn, Endenicher Allee 60, 53115 Bonn, Germany}
\email{huybrech@math.uni-bonn.de, mattei@math.uni-bonn.de}

\begin{abstract} \noindent We use twisted relative Picard varieties to split
Brauer classes on projective varieties over algebraically closed fields
by torsors for a fixed abelian scheme independent of the Brauer class. The construction is also used to prove that the index of an unramified Brauer class
divides a fixed power of its period.

 \vspace{-2mm}
\end{abstract}

\maketitle
\blfootnote{The authors are supported by the ERC Synergy Grant HyperK (ID 854361).}

\section{Introduction} 
We begin by stating the two main results of the paper. 
For motivation and further comments on the history
we refer to Sections \ref{sec:IntroSplit} \& \ref{sec:pip}.

\subsection{} Our first theorem shows that  Brauer classes on a projective variety can be split after pull-back to torsors for abelian varieties,  its proof
will be explained in Section \ref{sec:TwistedPic} and concluded in Section \ref{sec:proof}.

\begin{thm}\label{thm:main}
Let $X$ be an integral projective variety over an algebraically closed field $k$
and let $K(X)$ be its function field. Then there exists an abelian variety $A$ over $K(X)$
such that for every Brauer class $\alpha\in \Br(X)$ one finds an $A$-torsor $B_\alpha$ 
 that splits $\alpha$.
\end{thm}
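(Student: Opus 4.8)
\emph{Plan.} The idea is to fibre $X$ into curves, to replace the relative Picard variety of the resulting family by its twist along the Brauer class, and to read off both the abelian variety and the splitting torsor from the associated twisted Poincar\'e sheaf. Since the assertion only involves the image of $\alpha$ in $\Br(K(X))$, after passing to a birational model we may assume that $X$ is smooth and carries a flat projective morphism $f\colon X\to B$ onto a smooth projective variety $B$ with $\dim B=\dim X-1$ whose generic fibre $C:=X_\eta$ over $L:=K(B)$ is a smooth, geometrically integral curve with $K(X)=L(C)$; such an $f$ arises from a fixed projective embedding $X\subset\PP^N$ by restricting a general linear projection onto $\PP^{\dim X-1}$ and resolving its indeterminacy, the smoothness and geometric integrality of the generic fibre being Bertini statements over the algebraically closed field $k$ (a minor additional argument is needed in positive characteristic). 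As $f$ involves no Brauer class, the Jacobian $J:=\Pic^0_{C/L}$ is an abelian variety over $L$ independent of $\alpha$, and we take for the abelian variety of the theorem
\[
A\ :=\ J\times_L K(X).
\]

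\emph{The torsor attached to $\alpha$.} Fix $\alpha\in\Br(X)$ and restrict it to $C$. Since the Brauer group of a curve over an algebraically closed field is trivial (Tsen), the twisted relative Picard scheme $\Pic^{\alpha}_{C/L}$, which parametrises $\alpha$-twisted invertible sheaves along the fibres of $C/L$, is a twisted form of $\Pic_{C/L}$; in particular each degree-$d$ component $\Pic^{\alpha,d}_{C/L}$ is (nonempty, hence) a torsor under $J$. By the construction of Section~\ref{sec:TwistedPic} there is a degree $d$ --- governed by the period and the index of $\alpha$ --- for which $C\times_L\Pic^{\alpha,d}_{C/L}$ carries a relative twisted Poincar\'e \emph{invertible} sheaf $\mathcal P$, that is, a $\mathrm{pr}_C^{*}\alpha$-twisted line bundle (writing $\alpha$ also for its pullback to $C$). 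We then set
\[
B_\alpha\ :=\ \Pic^{\alpha,d}_{C/L}\times_L K(X),
\]
which, being the base change of a $J$-torsor, is an $A$-torsor over $K(X)$.

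\emph{Splitting.} The function field of $B_\alpha$ is the function field of $C\times_L\Pic^{\alpha,d}_{C/L}$, and the $K(X)$-scheme structure of $B_\alpha$ is the one induced by the projection $\mathrm{pr}_C$ to the $C$-factor; consequently $\alpha_{K(B_\alpha)}$, the image of $\alpha$ in $\Br(K(B_\alpha))$, is the restriction of the twisting class $\mathrm{pr}_C^{*}\alpha$ to the generic point of $C\times_L\Pic^{\alpha,d}_{C/L}$. Restricting $\mathcal P$ to that generic point therefore produces an $\alpha_{K(B_\alpha)}$-twisted \emph{line} bundle on $\Spec K(B_\alpha)$, and the existence of a twisted line bundle forces the underlying class to vanish, so $\alpha_{K(B_\alpha)}=0$. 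Thus $B_\alpha$ splits $\alpha$, and since $A$ was fixed once and for all, independently of $\alpha$, the theorem follows.

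\emph{Main obstacle.} The crux is the construction of the twisted Poincar\'e \emph{line} bundle: a priori the universal object over $C\times_L\Pic^{\alpha,d}_{C/L}$ is only a $\mathrm{pr}_C^{*}\alpha\otimes\mathrm{pr}_{\Pic}^{*}\beta_d$-twisted sheaf for some obstruction class $\beta_d\in\Br(\Pic^{\alpha,d}_{C/L})$, and one must exploit the freedom in the degree $d$ --- equivalently, the interaction between the component $\Pic^{\alpha,d}_{C/L}$, the period of $\alpha$, and the indices of the fibres of $f$ --- to ensure $\beta_d=0$. This is exactly the point where twisting the Picard scheme by $\alpha$, rather than using the ordinary one, is indispensable; it is carried out in Section~\ref{sec:TwistedPic}, and the argument is then assembled in Section~\ref{sec:proof}. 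Keeping track of the rank of $\mathcal P$ in the same construction is what yields the period--index statement of the paper's second main theorem.
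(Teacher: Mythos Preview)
Your outline is essentially the paper's own strategy: fibre a birational model of $X$ into curves, take for $A$ the (base change of the) Jacobian of the generic fibre, and for each $\alpha$ take $B_\alpha$ to be a suitable component of the $\alpha$-twisted relative Picard scheme, using the twisted Poincar\'e sheaf to split $\alpha$. Your identification of the function field of $B_\alpha$ with that of $C\times_L\Pic^{\alpha,d}_{C/L}$ and the splitting argument via a rank-one twisted sheaf are exactly what the paper does.

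There is, however, one point where your sketch is too optimistic and where it diverges from what the paper actually proves. You write that ``there is a degree $d$ \dots\ for which $C\times_L\Pic^{\alpha,d}_{C/L}$ carries a relative twisted Poincar\'e invertible sheaf'', i.e.\ that the obstruction $\beta_d$ can always be killed by varying $d$. The paper does \emph{not} establish this. What Section~\ref{sec:finems} shows is that $\beta_d=0$ can be arranged only when $|\alpha|$ is coprime to a fixed integer $m=(h^{n-1}.Y)$ depending on the chosen fibration. To cover the remaining classes, Section~\ref{sec:proof} passes to a second blow-up $X'$ with a second fibration whose analogous integer $m'$ is coprime to $m$, decomposes an arbitrary $\gamma\in\Br(X)$ as $\gamma=\alpha\cdot\alpha'$ with $(|\alpha|,m)=1$ and $(|\alpha'|,m')=1$, and splits $\gamma$ by the \emph{product} torsor $B_\alpha\times B'_{\alpha'}$. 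Consequently the abelian variety produced by the paper is not your single Jacobian $A=J\times_LK(X)$ but rather a product $A\times A'$ of two such Jacobians coming from two different curve fibrations. Your paragraph ``Main obstacle'' correctly flags the issue but does not indicate this doubling, and your stated $A$ would not suffice for the argument as written in the paper.
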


In other words, for every $\alpha\in \Br(X)$ the composition $$\Br(X)\to \Br(K(X))\to\Br(B_\alpha)$$ maps $\alpha$ to the trivial Brauer class on $B_\alpha$.
Thus, the theorem shows how to split unramified Brauer classes over the
function field $K(X)$ by passing to torsors for a fixed abelian variety.

We emphasise that our construction produces an abelian variety $A$ that
is independent of the Brauer class $\alpha$, but the torsor $B_\alpha$  does depend on $\alpha$. However,  as all the $B_\alpha$  are torsors for the same abelian variety $A$, their dimensions are independent of $\alpha$. Let us also point out that typically $A$ and $B_\alpha$ are not unique and may even come in families.

\subsection{} The second main result concerns the period--index problem.
Recall that the period of a central simple algebra $A$ over a field $K$ is by definition
the order $\per(A)=\per(\alpha)=|\alpha|$ of its class $\alpha=[A]\in\Br(K)$ in the Brauer group of $K$. Its index is defined as $\ind(A)=\ind(\alpha)=\dim(D)^{1/2}$, where
$D$ is the unique division algebra such that $A\cong M_n(D)$ for some $n$.
It is known classically that $\per(D)\mid \ind(\alpha)$ and that both invariants share the same prime factors, i.e.\ $\ind(\alpha)\mid\per(\alpha)^{e_\alpha}$ for some integer $e_\alpha$ which a priori depends on $\alpha$. \smallskip

Our second theorem shows that for unramified Brauer classes over the function field $K(X)$ of a projective variety, i.e.\ for classes in the image of $\Br(X)\to \Br(K(X))$,
the exponent $e_\alpha$ can be chosen uniformly. The proof is presented in Section \ref{sec:proofInd}.

\begin{thm}\label{thm:mainind}
Let $X$ be an integral projective variety over an algebraically closed field.
Then there exists an integer $e$ such that for all unramified classes $\alpha\in \Br(K(X))$ one has  $$\ind(\alpha)\mid\per(\alpha)^{e}.$$
\end{thm}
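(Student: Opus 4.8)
Throughout write $K=K(X)$. The plan is to feed the splitting torsor of Theorem~\ref{thm:main} into a short descent argument, after first reducing to a single prime. Given an unramified class $\alpha\in\Br(K)$, decompose it into its primary components $\alpha=\sum_{\ell}\alpha_\ell$; each $\alpha_\ell$ is again unramified, and $\ind(\alpha)=\prod_\ell\ind(\alpha_\ell)$, $\per(\alpha)=\prod_\ell\per(\alpha_\ell)$, so it suffices to produce an integer $e$, independent of $\ell$ and of $\alpha$, with $\ind(\alpha_\ell)\mid\per(\alpha_\ell)^{e}$ for every $\ell$. Fix $\ell$, set $\beta:=\alpha_\ell$, $\per(\beta)=\ell^{a}$, $\ind(\beta)=\ell^{v}$. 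Applying Theorem~\ref{thm:main} we get the abelian variety $A$ over $K$; put $g:=\dim A$ and let $B=B_\beta$ be the $A$-torsor splitting $\beta$. Being a torsor under an abelian variety over a field, $B$ is a smooth projective, hence regular, integral $K$-scheme, so the restriction $\Br(B)\to\Br(K(B))$ is injective. The pullback of $\beta$ along $B\to\Spec K$ maps further to $\beta|_{K(B)}=0$, so it is already trivial in $\Br(B)$; hence $\beta|_{\kappa(x)}=0$, and a fortiori $\ind(\beta)\mid[\kappa(x):K]$, for \emph{every} closed point $x\in B$.

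The one input not contained in the statement of Theorem~\ref{thm:main}, and the step I expect to be the genuine obstacle, is the bound $\per(B)\mid\per(\beta)$. This must be extracted from the twisted relative Picard construction of Section~\ref{sec:TwistedPic}: there $B$ arises as a component of a (twisted) relative Picard scheme, and its class in $H^1(K,A)$ is tied to $\beta$ through the Leray spectral sequence for $\GG_m$ in such a way that $\per(\beta)\cdot[B]=0$ in $H^1(K,A)$. Granting this, $[B]$ is killed by $\ell^{a}$, so from the fppf-exact sequence $0\to A[\ell^{a}]\to A\to A\to 0$ — with the last map multiplication by $\ell^{a}$, exact also when $\ell=\mathrm{char}\,k$ — we may lift $[B]$ to a class $\tilde c\in H^1_{\mathrm{fppf}}\bigl(K,A[\ell^{a}]\bigr)$. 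The corresponding torsor $T$ under the finite flat $K$-group scheme $A[\ell^{a}]$, which has rank $\ell^{2ga}$, is a finite flat $K$-scheme of degree $\ell^{2ga}$; picking any closed point $t\in T$ and writing $L:=\kappa(t)$ we get $[L:K]\le\ell^{2ga}$, hence $v_\ell\bigl([L:K]\bigr)\le 2ga$.

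Since $T$ has an $L$-point, $\tilde c|_L=0$, hence $[B]|_L=0$ and $B$ acquires an $L$-point; restricting the (trivial) class $\beta|_{B}$ to it gives $\beta|_L=0$, so $L$ is a finite splitting field of $\beta$ and $\ind(\beta)=\ell^{v}\mid[L:K]$. Therefore $v\le v_\ell\bigl([L:K]\bigr)\le 2ga$, i.e.\ $\ind(\beta)\mid\ell^{2ga}=\per(\beta)^{2g}$. Thus $e:=2g=2\dim A$ works, and reassembling, $\ind(\alpha)=\prod_\ell\ind(\alpha_\ell)\mid\prod_\ell\per(\alpha_\ell)^{e}=\per(\alpha)^{e}$. (If the construction only yields $\per(B)\mid\per(\beta)^{c}$ for a fixed $c$, the same argument gives $e=2gc$.) Beyond the period bound the argument is soft; in particular no separate analysis in characteristic $\ell$ is required, since the only feature of $T$ that is used is that it is a finite flat $K$-scheme of degree $\ell^{2ga}$.
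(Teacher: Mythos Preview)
Your argument is correct, and the period bound $\per(B_\beta)\mid\per(\beta)$ that you flag as the crux does fall out of the construction---though more concretely than via the Leray spectral sequence. The tensor-power map $E\mapsto E^{\otimes\per(\beta)}$ of~(\ref{eqn:finitemorph}) gives a morphism $\Pic^d_\beta(C)\to\Pic^a(C)\cong\Pic^0(C)$ equivariant for multiplication by $\per(\beta)$ on $A=\Pic^0(C)$; since the target is the trivial torsor (the curve $C$ has a $K_0$-point from the exceptional divisor), this yields $\per(\beta)\cdot[B_\beta]=0$ in $H^1(K,A)$ directly.

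The paper's route from this same morphism $\varphi$ to the bound is different, however. Rather than lifting $[B_\beta]$ to an $A[\per(\beta)]$-torsor and extracting a small splitting field, the paper pulls back the theta divisor: $M=\varphi^\ast\ko(\Theta)$ is ample on $P=\Pic^d_\alpha(C)$ with $(M)^g=g!\cdot\per(\alpha)^{2g}$, and the space $V=H^0(P_K,\kp'|_{P_K}\otimes M)$ of theta-like sections is an $\ka_K$-module of $K$-dimension $d(\ka)\cdot\per(\alpha)^{2g}$, whence $\ind(\alpha)\mid\per(\alpha)^{2g}$ via $\ind\cdot d(\ka)\mid\dim_K V$. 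This buys an explicit module over the Azumaya algebra, but needs the universal sheaf $\kp$ and hence the fineness and coprimality manoeuvres of Sections~\ref{sec:finems} and~\ref{sec:End}. Your argument needs only the torsor and its period, not $\kp$; this is exactly the simplification the paper records at the end of Section~\ref{sec:pip} as having been communicated independently by Lieblich and by Antieau--Auel (``restriction to the finite kernel of the morphism''). One caveat: by invoking Theorem~\ref{thm:main} for the splitting $\beta|_B=0$ you still import the fineness argument through the back door, so to fully realise the simplification one would want to argue the vanishing at $L$-points of $B$ straight from the moduli description rather than from the existence of $\kp$.
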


Roughly, the exponent appearing in the theorem is 
$e=2g(C)$, where $g(C)$ is the genus of  a very ample complete intersection curve on $X$ {and thus does depend on $X$}, cf.\ (\ref{eqn:indper}). This uniform bound for the index is a consequence of the same geometric construction of the torsors 
used to prove Theorem \ref{thm:main}.
\smallskip

{Section \ref{sec:warmup} treats the case of elliptic K3 surfaces, with or without a section. It serves as a warm-up to the general case, as some of the arguments are much more apparent, but it
also proves more specific results in this situation.}


\subsection{Splitting Brauer classes}\label{sec:IntroSplit}
Splitting Brauer classes by passing to higher-dimensional varieties is nothing unusual.
For example, if $\pi\colon P_\alpha\to X$ is a Brauer--Severi variety representing
the Brauer class $\alpha\in\Br(X)$ then $\pi^\ast(\alpha)$
is the trivial class in $\Br(P_\alpha)$. However, the dimension of
$P_\alpha$ depends on $\alpha$ and cannot be {uniformly} bounded.

Since for any subvariety $Y\subset P_\alpha$ the projection $\pi_Y\colon Y\to X$
also splits $\alpha$, i.e.\ $\pi_Y^\ast(\alpha)$ is the trivial class in $\Br(Y)$, one can use hyperplane sections of the projective variety $P_\alpha$ to produce a projective variety $Y_\alpha \twoheadrightarrow X$ of relative dimension one with irreducible generic fibre that splits a given Brauer class $\alpha$. So, the dimension of the variety to split $\alpha$ can be bounded.
However, with this construction the genus of the generic
fibre will be unbounded for varying $\alpha$.

In fact, it is known that if a smooth projective curve $C$ of genus $g$ over a field $K$ splits a Brauer class $\alpha\in \Br(K)$, then  the order of $\alpha$ divides $2g-2$.\TBC{and in fact its index. Add reference. It is not in CT-S or G. It follows from the standard sequence relating Br and Pic, I think.} {Thus, when its genus is at least two, it is bounded  from below by $(1/2)\cdot|\alpha|$.}\smallskip
 
Whether every Brauer class $\alpha\in \Br(X)$, or more generally $\alpha\in \Br(K)$,
is split by a genus one curve $C_\alpha$ is a rather recent question. In this generality, it was first raised by Clark and Saltman, cf.\ \cite{Clark,RuVi}, but {with earlier results for local fields by Roquette \cite{Ro}.} The question is wide open but it was answered positively 
for low index, see \cite{Auel,AntAuel,dJHo}. 
In Section \ref{sec:ellK3} we settle the case of Brauer classes on elliptic K3 surfaces with a section.\smallskip

Note that in light of our result one could ask the even stronger question whether every class $\alpha\in \Br(X)$ is split by a torsor for some fixed elliptic curve $E$ over $K(X)$. However, doubts already about the weaker version
have been raised by de Jong and Ho \cite{dJHo} and it seems likely that Theorem \ref{thm:main} is the best result in this direction one can hope for in general.  {Note however that Antieau and Auel \cite[Thm.\ C]{AA} show that every cyclic Brauer class over a field containing an algebraically closed field is split by a torsor for an elliptic curve.}\smallskip

 {Theorem \ref{thm:main} can be seen as an improvement of a result by Ho and Lieblich \cite{HoLieb}, it essentially answers  their Question 6.0.1
 for unramified classes, and by Antieau and Auel \cite[Thm.\ 3.18]{AA}.} More concretely, 
 in \cite{HoLieb} it is shown that for a fixed field $K$ there exists for every class $\alpha\in\Br(K)$ of index $\not\equiv2\,(4)$ a smooth projective curve $C_\alpha$ such that $\alpha$ is split by its Albanese
 ${\rm Alb}(C_\alpha)=\Pic^1(C_\alpha)$, a torsor for the abelian variety $\Pic^0(C_\alpha)$. If the index is $\equiv 2(4)$, then $\alpha$ is split by $C'\times\Pic^1(C_\alpha)$, where $C'$ is an additional curve of genus one. In both cases, the curve $C_\alpha$ itself is obtained by taking complete intersections in a Brauer--Severi variety representing $\alpha$ as described above. In particular, its genus and hence the dimension of 
 $\Pic^1(C_\alpha)$ depends on the index of $\alpha$ and cannot be bounded.
 {As was explained to us by Auel, splitting by torsors for abelian varieties can also be achieved by using the Merkurjev--Suslin theorem \cite{MS}, cf.\ \cite[Ch.\ 8]{GS}, which shows  that  $\Br(K(X))$ is generated by cyclic classes. Combined with a result by Matzri
 \cite{Matzri} bounding the symbol length of Brauer classes over function fields, the dimension of the abelian variety can be again bounded in terms of the period of the Brauer class.} {The argument in \cite[\S3]{AA} uses instead a result of Raynaud about realizing finite abelian group schemes as subschemes of abelian schemes.}
 \smallskip
 
In  \cite{HoLieb} one finds further questions about splitting Brauer classes. Torsors for elliptic curves and abelian varieties seem geometrically the most interesting cases, but one could try to use K3 surfaces or Calabi--Yau varieties just as well. We have nothing to say about those.

\subsection{Period--index problem}\label{sec:pip}
The period--index problem, asking for an exponent $e_\alpha$
that is independent of $\alpha$, has a long history. 
For number fields, it has been proved by Albert, Brauer, Hasse, and Noether \cite{AH,BNH} that
$\per(\alpha)=\ind(\alpha)$, i.e.\ $e_\alpha=1$, and the same holds for function fields of transcendence degree one over finite fields.

In the geometric situation,
where $K$ is the function field of a {curve} over an algebraically closed field, the question is vacuous by Tsen's theorem. {For surfaces, de Jong \cite{dJ} proved $\per(\alpha)=\ind(\alpha)$,
so again $e_\alpha=1$, for Brauer classes $\alpha$ with $\per(\alpha)$ prime to the characteristic of the ground field, so in particular for all classes in characteristic zero. Again de Jong \cite{dJS} and Lieblich \cite{LiebComp,LiebAnn} later extended the result to all classes.
Already in \cite{CTBr}, Colliot-Th\'el\`ene proposed the bound $e_\alpha=\dim(X)-1$ 
in arbitrary dimensions, see also \cite[\S 2.4]{CTBourb}.}

{In \cite[Conj.\ 1.2]{dJP}, de Jong and Perry formulated a weaker a conjecture
that asks for a bound
$e=e_\alpha$ that only depends on the variety $X$ and not on the (unramified) Brauer class $\alpha$. Moreover, they proved the existence of such uniform bound under the assumption of that the Lefschetz standard conjecture holds true in degree two. Theorem \ref{thm:mainind} proves their conjecture unconditionally.}

{It is known that techniques developed by de Jong and Starr \cite{dJS,Starr} allow one to deduce the conjecture $e_\alpha=\dim(X)-1$ for all Brauer classes from the case of unramified ones. Unfortunately, as our exponent depends on the geometry of $X$ and not only on its dimension, this argument does not work which limits our approach to the case of unramified Brauer classes.}
 \smallskip

{The proof of Theorem \ref{thm:mainind} relies on the construction of a module $V_\alpha$ of bounded dimension over the Azumaya algebra $A$ representing $\alpha$.  This module $V_\alpha$ is eventually constructed as a space of theta functions on the twisted Picard variety $B_\alpha$, see (\ref{eqn:theta}) in Section \ref{sec:alg}.}

\smallskip

{After the first version of our paper appeared on the arxiv, an alternative argument for Theorem \ref{thm:mainind} was communicated to us 
by M.\ Lieblich and also by B.\ Antieau (coming out of an independent discussion with A.\ Auel).
Colliot-Th\'el\`ene informed us that also D.\ Krashen, has independently found a proof
for the existence of a uniform bound.

The geometric setup is  the same.  
The tensor product $E\mapsto E^{\otimes \per(\alpha)}$ is used to define a morphism $\Pic^d_\alpha(C)\to \Pic^0(C)$ from a twisted Picard variety to an untwisted one. 
In our approach, the module $V_\alpha$ alluded to above is constructed as the space of global sections of the pull-back of the theta divisor twisted by the universal Poincar\'e bundle. The argument simplifies considerably, if instead restriction to the finite kernel of the morphism 
is used. In particular, the argument avoiding the theta bundle leads then directly to the exponent $2g(C)$ without 
the further complication of having to ensure the existence of a universal twisted bundle 
as in Sections \ref{sec:nosect} \& \ref{sec:End}. 
}

\subsection{Twisted Picard varieties} The key idea to approach Theorem \ref{thm:main} is very simple. We
use a dominating universal family of smooth (complete intersection) curves $\kc\to T$ in $X$ and consider the twisted relative Picard variety 
$\Pic_\alpha^d(\kc/T)$. These  twisted Picard varieties are special cases of moduli spaces of twisted sheaves constructed by Lieblich \cite{LiebDuke} and Yoshioka \cite{Yosh}, but  can also be viewed as instances of Simpson's moduli space
of $\Lambda$-modules \cite{Simpson}, see also \cite{HS} for the case of positive characteristic. They have recently been studied for K3 surfaces in \cite{HuyMa} and we use the notation introduced there. For an appropriate choice of $d\in\QQ$, the projection $\Pic_\alpha^d(\kc/T)\to T$ is a torsor for the abelian scheme $\Pic^0(\kc/T)\to T$ and in the ideal
situation $\Pic_\alpha^d(\kc/T)$, as a moduli space of twisted sheaves on $X$, is  fine. Since the universal family over
$\kc\times_T\Pic_\alpha^d(\kc/T)$ is twisted with respect to the pull-back of $\alpha$ and it is of rank one, the pull-back of $\alpha$ is split. \smallskip

The final step is to shrink $\kc\to T$ to $\kc'\to T'$ such that
the projection $\kc'\to X$ is birational. Then the projection $\kc'\times_{T'}\Pic_\alpha^d(\kc'/T')\to\kc'\to X$ provides the torsor claimed in the theorem.\smallskip

Let us be a bit more explicit about the moduli space that we wish to exploit. As a first step, we pick an Azumaya algebra
$\ka$ on $X$ that represents the Brauer class $\alpha$. Then  $\Pic^d_\alpha(\kc/T)$
is  constructed as an open subset of a moduli space of modules over $\ka$ supported on curves $\kc_t\subset X$, $t\in T$, cf.\ \cite{Simpson}. The universal family is then a sheaf $\kp$ on $\kc\times_T\Pic_\alpha^d(\kc/T)$ which is a module over the pull-back $\ka'$ of $\ka$
under the projection to $X$. By definition of the twisted Picard variety as a moduli space
of certain sheaves, the rank
of $\kp$ is $d(\ka)\coloneqq\sqrt{\rk(\ka)}$. This implies that the pull-back of $\alpha$ is trivial,
for $\ka'\cong\kend(\kp)$ on a dense Zariski open subset.
\smallskip

There are various ways of dealing with moduli spaces in a twisted situation. Each of them
involves an additional choice, e.g.\ of a \v{C}ech cocycle, an Azumaya algebra,
 a Brauer--Severi varieties, or $\mu_n$-gerbe, representing the fixed Brauer class.
The moduli space is then constructed as a space of sheaves twisted with respect to the
\v{C}ech cocycle, of modules over the Azumaya algebra, of sheaves
on the Brauer--Severi variety, or of {invertible sheaves of weight one on a $\mu_n$-gerbe.} We decided to represent the Brauer class by
an Azumaya algebra but every other choice would have worked just as well.\smallskip

In the end the triviality of the pull-back $\alpha'$ of $\alpha$ follows from the existence of a locally free $\alpha'$-twisted sheaf of rank one or, in terms of Azumaya algebras, from the existence of a locally free $\ka'$-module of rank $d(\ka)$,
where $\ka$ represents $\alpha$ and $\ka'$ is its pull-back.

\medskip

\noindent
{\bf Acknowledgements:} We wish to thank Asher Auel
for inspiration and for help with the literature, as well as for comments on the first version of the paper. We also grateful to Evgeny Shinder for
critical comments on the first draft and to Gebhard Martin for interesting questions.
The first author gratefully acknowledges the hospitality of the ITS-ETH Zurich during his stay in the spring of 2023.

\section{Warmup: Elliptic curves on K3 surfaces}\label{sec:warmup}

Before presenting the general proof of Theorem \ref{thm:main}, we discuss the special case of elliptic K3 surfaces. It is the first instance where universal sheaves are seen to split Brauer classes and it is instructive to study this case first before dealing with the general situation in the next section. As it seems more customary, we here work with twisted sheaves instead of modules over Azumaya algebras, but this does not effect the essence of the argument.

We will find that for elliptic surfaces with a section all Brauer classes are split by a genus one fibration (over a fixed elliptic curve) and so Clark's original questions has an affirmative answer in this case, see Proposition \ref{prop:warmup}. For elliptic K3 surfaces without a section, only Brauer classes of an order coprime to the multi-section index of the elliptic fibration  can be split by the same method, see Proposition \ref{prop:K3nosect}.

We will conclude this section by discussing families of (singular) K3 surfaces. Something can still be said but the result is less compelling, see Proposition \ref{prop:singell}.\smallskip

{In fact, the discussion in this section applies to elliptic surfaces, with and without a section,
that are not necessarily K3 surfaces. However, for simplicity and
as the main purpose of this section is to illustrate the proof in the general setting, we stick to K3 surfaces.}

\subsection{Elliptic K3 surfaces with a section}\label{sec:ellK3} Consider an elliptic K3 surface $S_0\to\PP^1$ with a section and a class
$\alpha\in\Sha(S_0/\PP^1)$. By definition of the Tate--{\v{S}}afarevi{\v{c}} group
$\Sha(S_0/\PP^1)$,
the class $\alpha$ corresponds to an elliptic K3 surface $S\to\PP^1$ (without a section)
together with an isomorphism $\Pic^0(S/\PP^1)\cong S_0$ relative over $\PP^1$. Here,
$\Pic^0(S/\PP^1)$ denotes the minimal smooth compactification of the Jacobian
fibration of the family of smooth fibres of $S\to \PP^1$. Such a compactification is
provided by the moduli space $M(0,f,0)$ of stable sheaves on $S$ with Mukai vector $(0,f,0)$, where
$f$ is the class of a fibre, cf.\ \cite[Ch.\ 11]{HuyK3}. However, such a moduli space
is  not fine, i.e.\ there is no universal family on $S_0\times S$. 

The obstruction for a universal family to exist is a class in $\Br(S_0)$. In fact, this class
is nothing but $\alpha$ itself under the well-known isomorphism $\Sha(S_0/\PP^1)\cong\Br(S_0)$.
To be more precise, the obstruction class in $\Br(S_0)$ is the class of
the \v{C}ech cocycle that comes up naturally when we try to glue the local (\'etale or analytic) universal families, which always exist. This point of view also shows that a universal family $\kp$ exists as a twisted sheaf
$\kp$ on $S\times S_0$, where the twist is with respect to the pull-back of the cocycle on
$S_0=\Pic^0(S/\PP^1)$ representing the obstruction class $\alpha$. The construction was systematically studied by C\v{a}ld\v{a}raru \cite{Cald}.\smallskip

By definition of the moduli space, the $(1\times \alpha)$-twisted sheaf $\kp$
on $S\times S_0$ has support on the closed subscheme $S\times_{\PP^1}S_0\subset S\times S_0$. Furthermore, there it is of rank one. Hence, on a non-empty Zariski open subset
 $V\subset S\times_{\PP^1}S_0$, the twisted sheaf $\kp$ is locally free of rank one
which implies that $(1\times\alpha)|_V$ is a trivial Brauer class.

The generic fibre of the second projection $S\times_{\PP^1}S_0\to S_0$ is a smooth
curve of genus one. More precisely, it is  the generic fibre
$S_\zeta$ of $S\to \PP^1$  base changed to the generic point $\eta\in S_0$ which, of course, maps to $\zeta$ under the projection $S_0\to \PP^1$. We have proved the following result.

\begin{prop}\label{prop:warmup}
Let $S_0\to \PP^1$ be an elliptic K3 surface with a section and let $\alpha\in \Br(S_0)$. Then there exists a genus one curve $C_\alpha$ over its function field $K(S_0)$ such that the image  of $\alpha$ under the composition
$\Br(S_0)\to \Br(K(S_0))\to\Br(C_\alpha)$ is trivial.\qed
\end{prop}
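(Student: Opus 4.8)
The plan is to extract the statement directly from the moduli-theoretic discussion that precedes it, so very little extra work is needed. Given $\alpha\in\Br(S_0)$, first I would appeal to the isomorphism $\Sha(S_0/\PP^1)\cong\Br(S_0)$ to realise $\alpha$ as an elliptic K3 surface $S\to\PP^1$ without a section together with a relative identification $\Pic^0(S/\PP^1)\cong S_0$ over $\PP^1$. Then I would invoke the moduli description $S_0\cong M(0,f,0)$ of stable sheaves on $S$ with Mukai vector $(0,f,0)$ (the minimal compactification of the Jacobian fibration), and the fact, due to C\u ald\u araru, that a universal family exists as a $(1\times\alpha)$-twisted sheaf $\kp$ on $S\times S_0$, supported on the closed subscheme $S\times_{\PP^1}S_0$ and of rank one there.

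Next I would run the splitting argument: since $\kp$ is a twisted sheaf of rank one on $S\times_{\PP^1}S_0$, it is locally free of rank one on some non-empty Zariski open $V\subset S\times_{\PP^1}S_0$; the existence of a locally free $(1\times\alpha)|_V$-twisted line bundle forces $(1\times\alpha)|_V$ to be the trivial Brauer class. I would then take $C_\alpha$ to be the generic fibre of the second projection $S\times_{\PP^1}S_0\to S_0$: this is the generic fibre $S_\zeta$ of $S\to\PP^1$ base-changed along $\eta\mapsto\zeta$ under $S_0\to\PP^1$, hence a smooth genus one curve over $K(S_0)$. Restricting the vanishing of $(1\times\alpha)|_V$ to the generic point of $S_0$ (which lies in $V$, as $V$ is dense) shows that the image of $\alpha$ under $\Br(S_0)\to\Br(K(S_0))\to\Br(C_\alpha)$ is trivial.

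Since every ingredient — the identification $\Sha\cong\Br$, the moduli interpretation of the Jacobian compactification, and the existence of the twisted universal sheaf — has already been recalled in the text, there is essentially no obstacle here; the only point requiring a line of care is that the generic point of $S_0$ meets the dense open $V\subset S\times_{\PP^1}S_0$ so that the triviality of the restricted class propagates to the generic fibre $C_\alpha$, and that $C_\alpha$ is geometrically a smooth genus one curve rather than a degeneration, which follows because the relevant curves are the smooth fibres of the elliptic fibration $S\to\PP^1$.
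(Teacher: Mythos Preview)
Your proposal is correct and follows the paper's argument essentially verbatim: the same identification $\Sha(S_0/\PP^1)\cong\Br(S_0)$, the same moduli-theoretic description $S_0\cong M(0,f,0)$, the same $(1\times\alpha)$-twisted universal sheaf $\kp$ supported on $S\times_{\PP^1}S_0$, and the same conclusion via the rank-one locus $V$ and passage to the generic fibre of the second projection. Your extra remark about the generic point of $S_0$ meeting $V$ is a small clarification the paper leaves implicit, but otherwise the two arguments coincide.
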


We can be more precise about the curve $C_\alpha$. By construction, it
is the generic fibre of $S\times_{\PP^1}S_0\to S_0$ which is the base
change to $K(S_0)$ of the moduli space of line bundles on the generic fibre of $S_0\to \PP^1$ twisted with respect to the restriction of $\alpha$. But this moduli space is naturally a torsor for $\Pic^0$ of the generic fibre of $S_0\to \PP^1$ where the action is given by tensor product.

Since $S_0\to \PP^1$ has a section, its relative $\Pic^0$ is just $S_0\to\PP^1$ itself. Hence, all the genus one curves $C_\alpha$
in Proposition \ref{prop:warmup} are torsors for the same elliptic curve, namely the 
base change to $K(S_0)$ of the generic fibre of $S_0\to \PP^1$.

\subsection{Elliptic K3 surfaces without a section}\label{sec:nosect}
Since in the above discussion, the K3 surface $S$ could be seen as
a certain moduli space $\Pic^d_\alpha(S_0/\PP^1)$ of twisted sheaves on the fibres
of $S_0\to \PP^1$, cf.\ \cite{HuyMa}, one could try to extend the argument to
elliptic K3 surfaces $S_0\to \PP^1$ without a section by simply considering $\Pic^d_\alpha(S_0/\PP^1)$. In \cite{HuyMa} we explain why
the notation $\Pic^d_\alpha(S_0/\PP^1)$ makes only sense when a
lift of $\alpha$ to an element in the special Brauer group $\SBr(S_0)$ is chosen: Only then the degree $d$ is well defined. This subtlety is of no importance in the discussion here and we will just ignore it, but see Section 
\ref{sec:introgen}.\smallskip

So the idea would be to use a universal family
$\kp$ on $\Pic^d_\alpha(S_0/\PP^1)\times_{\PP^1}S_0$ to argue that
the pull-back of $\alpha$ under the second projection has to be trivial. And indeed, by definition of $\Pic_\alpha^d(S_0/\PP^1)$ as a moduli space of $\alpha$-twisted sheaves, $\kp$ would be twisted with respect to $\alpha$ on the second factor.  

However, there is no guarantee that such a universal family $\kp$ exists as
a $(1\times\alpha)$-twisted sheaf, i.e.\ that the moduli space $\Pic^d_\alpha(S_0/\PP^1)$ is a fine moduli space of twisted sheaves on $S_0$. In other words, $\kp$ might only exist as a sheaf
that is not only twisted by $\alpha$ on the second factor but also by some
obstruction class on the first. If this is the case, the argument breaks down and we cannot conclude  the triviality of the pull-back of $\alpha$.

But something can be salvaged from this approach by exploiting a certain flexibility in the choice of $d$ (after choosing a lift of $\alpha$ to a class in the special Brauer group).
For this we make use of  a well-known criterion for 
$\Pic^d_\alpha(S_0/\PP^1)=M_\alpha(0,f,d)$ to be a fine moduli space:
It suffices to find an $\alpha$-twisted locally free sheaf $E$ such that $\chi(F\otimes E^\ast)=1$ for $F\in \Pic^d_\alpha(S_0/\PP^1)$. See
\cite[Ch.\ 4.6]{HuLe} for the untwisted case, the proof in the twisted case is identical. 

\begin{lem} Assume the order of $\alpha\in \Br(S_0)$ is coprime to the generator $m$ of $(\NS(S_0).f)$. Then there exists a (twisted) degree $d\in \QQ$ with
$\Pic^d_\alpha(S_0/\PP^1)$ non-empty and an $\alpha$-twisted locally free sheaf $E$ on $S$ such that $\chi(F\otimes E^\ast)=1$
for every $F$ parametrised by  $\Pic^d_\alpha(S_0/\PP^1)$. In particular, the moduli space $\Pic_\alpha^d(S_0/\PP^1)$ of $\alpha$-twisted sheaves on $S_0$ is fine.
\end{lem}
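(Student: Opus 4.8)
The plan is to exhibit a suitable twisted locally free sheaf $E$ on $S$ and a degree $d$ for which the Euler pairing $\chi(F\otimes E^\ast)$ equals $1$ on the moduli space; by the quoted criterion (\cite[Ch.\ 4.6]{HuLe} adapted to the twisted setting), this forces $\Pic^d_\alpha(S_0/\PP^1)=M_\alpha(0,f,d)$ to be fine. First I would recall that, for $\alpha$-twisted sheaves, the Riemann--Roch / Mukai pairing makes sense after replacing the Mukai lattice by the twisted Mukai lattice associated to a lift of $\alpha$ in $\SBr(S_0)$; concretely, choosing a rational $B$-field $B$ with $\alpha=\alpha_B$, an $\alpha$-twisted locally free sheaf $E$ has a well-defined twisted Mukai vector $v_B(E)\in H^\ast(S,\QQ)$, and for $F$ with twisted Mukai vector $v_B(F)$ one has $\chi(F\otimes E^\ast)=-\langle v_B(F),v_B(E)\rangle$. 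The objects $F$ parametrised by $\Pic^d_\alpha(S_0/\PP^1)$ have twisted Mukai vector of the shape $(0,f,d)$ (after the appropriate normalisation of $d\in\QQ$), so the pairing only depends on the rank and the $H^0\oplus H^4$-part of $v_B(E)$.

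Next I would compute $\langle(0,f,d),(r,c,s)\rangle = -r d - (f.c)$ using the signature conventions of the Mukai pairing, so that $\chi(F\otimes E^\ast)=rd+(f.c)$. Hence it suffices to produce a twisted locally free $E$ of some rank $r$ and with $(f.c_1^B(E))$-value such that $rd+(f.c)=1$ for the $d$ that makes the moduli space non-empty. The key number-theoretic input is the coprimality hypothesis: the image of the intersection pairing $(\,\cdot\,,f)$ on $\NS(S_0)$ is $m\ZZ$, and the order $n$ of $\alpha$ is coprime to $m$. The degrees $d$ realised by $\alpha$-twisted sheaves of rank one on the fibres lie in a coset of $\tfrac1n\ZZ$ (determined by the lift to $\SBr$), while the values $(f.c)$ with $c$ of the form $c_1^B(E)$ range — as $E$ varies over twisted locally free sheaves — over a coset of $m\ZZ$ shifted by a fractional term coming from $B$. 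Because $\gcd(m,n)=1$, one can solve the congruence $rd+(f.c)\equiv 1$ by an explicit choice: take a twisted locally free sheaf of suitable rank $r$ (for instance a twisted analogue of a spherical bundle, or simply a direct sum of twisted line bundles of controlled degrees along the fibres), adjust by tensoring with powers of $\ko_S(D)$ for a divisor $D$ with $(D.f)=m$, and invoke Bézout to hit the value $1$ exactly.

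Finally, once $\chi(F\otimes E^\ast)=1$ is arranged for all $F\in\Pic^d_\alpha(S_0/\PP^1)$, the standard construction produces a universal twisted sheaf: push forward $\kp\otimes p_S^\ast E^\ast$ along $\Pic^d_\alpha(S_0/\PP^1)\times S\to \Pic^d_\alpha(S_0/\PP^1)$; since each fibre contributes a one-dimensional space of cohomology concentrated in a single degree, this gives a line bundle on the moduli space which can be used to untwist $\kp$ on the first factor, showing the obstruction class in $\Br(\Pic^d_\alpha(S_0/\PP^1))$ vanishes and hence the moduli space is fine. The main obstacle I expect is the bookkeeping in the second step: making the coprimality hypothesis do its work requires tracking exactly which cosets of degrees $d$ are attained by non-empty $\Pic^d_\alpha(S_0/\PP^1)$ and which Chern-class values are attainable by twisted locally free sheaves on $S$ (as opposed to $S_0$), including the fractional shifts coming from the chosen $B$-field; keeping these normalisations consistent, and verifying non-emptiness of the moduli space for the $d$ one ends up choosing, is where the real care is needed. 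The existence of \emph{some} twisted locally free sheaf $E$ to start the argument is itself a known fact (any Brauer class on a surface is represented by an Azumaya algebra, giving a twisted locally free sheaf), so that is not the difficulty.
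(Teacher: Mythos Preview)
Your overall strategy matches the paper's---set up the Euler pairing, then use the coprimality of $|\alpha|$ and $m$ via B\'ezout to force $\chi(F\otimes E^\ast)=1$---but the concrete adjustment you propose does not work. Tensoring a rank-$r$ twisted sheaf $E$ by an untwisted line bundle $\ko_{S_0}(D)$ shifts $c_1^B(E)$ by $r[D]$, so $(f.c)$ moves only in steps of $r\cdot m$. Combined with varying $d$ (which moves $rd$ in steps of $r$), every modification you describe changes $\chi$ by a multiple of $r$, and you can never reach $1$. Your ``direct sum of twisted line bundles'' is also problematic: a rank-one $\alpha$-twisted locally free sheaf exists only when $\alpha$ is trivial.

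The paper supplies the missing operation: an \emph{elementary transformation} of $E$ along a curve $C\subset S_0$ with $(C.f)=m$. One chooses a surjection $E\twoheadrightarrow\kl$ onto an $\alpha$-twisted line bundle $\kl$ on $C$ (which exists because $\alpha|_C$ is trivial, $C$ being a curve over an algebraically closed field) and replaces $E$ by the kernel $E'$. This keeps $E'$ locally free of the same rank but shifts $c_1$ by $-[C]$, hence changes $\chi(F\otimes E^\ast)$ by exactly $m$, not $rm$. The other operation---replacing $F$ by the kernel of $F\twoheadrightarrow k(x)$---changes $\chi$ by $\rk(E)=|\alpha|$; here the paper invokes de~Jong's period--index theorem for surfaces to guarantee an initial $E$ of rank exactly $|\alpha|$, a point you only allude to. With these two moves in hand, coprimality of $|\alpha|$ and $m$ finishes the argument as you intended.
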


\begin{proof}
We start with any $\alpha$-twisted line bundle $F$ supported on a smooth fibre $f$ of
$S_0\to\PP^1$ that defines a point in some non-empty moduli space $\Pic_\alpha^d(S_0/\PP^1)$. Let us also fix  some $\alpha$-twisted
locally free sheaf $E$ on $S_0$ of rank $|\alpha|$. The existence of the latter is guaranteed by de Jong's solution \cite{dJ}  of the period-index problem for algebraic surfaces. 

We will now modify $E$, keeping it $\alpha$-twisted
and locally free of rank $|\alpha|$,  and $F$, possibly changing $d$ in the process, such that
eventually $\chi(F\otimes E^\ast)=1$. 

Observe first that the kernel $F'$ of any surjection $F\twoheadrightarrow k(x)$, $x\in f$, defines again an $\alpha$-twisted  line bundle on $f$ which satisfies
$\chi(F'\otimes E^\ast)=\chi(F\otimes E^\ast)-|\alpha|$. Second, we pick a curve $C\subset S_0$ with $(C.f)=m$\footnote{It is enough that $(C.f)=k\cdot m$ with also $k$ coprime to $|\alpha|$, which is slightly easier to arrange.} and consider an elementary transformation $E'$ of $E$ along $C$, i.e.\ $E'$ is the $\alpha$-twisted
locally free sheaf given as the kernel of some surjection 
$E\twoheadrightarrow \kl$ onto some $\alpha$-twisted line bundle $\kl$ on $C$. Then $\chi(F\otimes E'^\ast)=\chi(F\otimes E^\ast)-m$.

Since $|\alpha|$ and $m$ are coprime, applying the procedure multiple times
eventually provides us with $E$ and $F$ as claimed.
\end{proof}

Note that in particular we reprove the assertion in the case that $S_0\to \PP^1$ has section,
for then  $(\NS(S_0).f)=\ZZ$ and hence $m=1$. If there is no section, then
the discussion eventually leads to the following generalisation of Proposition
\ref{prop:warmup}.

\begin{prop}\label{prop:K3nosect}
Let $S_0\to \PP^1$ be an elliptic K3 surface and let $m\in \ZZ$ be such that
$m\cdot\ZZ=(\NS(S_0).f)$ for the fibre class $f$. Then one finds an elliptic curve $E$ over $K(S_0)$ such that for every $\alpha\in\Br(S_0)$
of order $|\alpha|$ coprime to $m$ there exists a torsor $C_\alpha$ for $E$ over $K(S_0)$ such that the image of $\alpha$ under $\Br(S_0)\to\Br(K(S_0))\to \Br(C_\alpha)$ is trivial.\qed
\end{prop}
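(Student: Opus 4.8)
The plan is to deduce Proposition \ref{prop:K3nosect} from the Lemma by running the same twisted-moduli argument as in the case with a section, carefully tracking what happens to the base abelian variety. So suppose $S_0\to\PP^1$ is an elliptic K3 surface and fix a fibre class $f$ with $m\cdot\ZZ=(\NS(S_0).f)$. The key point is that the Lemma gives, \emph{uniformly over all} $\alpha\in\Br(S_0)$ with $|\alpha|$ coprime to $m$, a twisted degree $d=d(\alpha)\in\QQ$ for which $\Pic^d_\alpha(S_0/\PP^1)$ is non-empty and a fine moduli space: there is an $\alpha$-twisted locally free sheaf $E$ with $\chi(F\otimes E^\ast)=1$ for all $F$ parametrised by the moduli space. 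Fineness produces a universal family $\kp$ on $\Pic^d_\alpha(S_0/\PP^1)\times_{\PP^1}S_0$ which is $(1\times\alpha)$-twisted (the obstruction on the first factor vanishes precisely because the moduli space is fine) and of rank one on its support. Passing to a non-empty Zariski open subset $V$ where $\kp$ is locally free of rank one shows $(1\times\alpha)|_V$ is trivial in $\Br(V)$, and restricting to the generic point of $S_0$ gives a genus one curve $C_\alpha$ over $K(S_0)$ splitting $\alpha$, exactly as in Proposition \ref{prop:warmup}.

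The second step, which is where the ``fixed $E$'' claim must be justified, is to identify all the $C_\alpha$ as torsors for a single elliptic curve $E/K(S_0)$ independent of $\alpha$. By construction $C_\alpha$ is the generic fibre of $\Pic^d_\alpha(S_0/\PP^1)\to S_0$, i.e.\ the base change to $K(S_0)$ of the moduli space of $\alpha$-twisted line bundles of fixed degree on the generic fibre $(S_0)_\zeta$ of $S_0\to\PP^1$, where $\zeta=\eta\mapsto$ the generic point of $\PP^1$. Tensoring with untwisted line bundles makes this moduli space a torsor for the degree-zero part, which is $\Pic^0$ of $(S_0)_\zeta$ base changed to $K(S_0)$ — and this abelian variety does not involve $\alpha$ at all. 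So I would set $E\coloneqq\Pic^0\big((S_0)_\zeta\big)\times_{K(\PP^1)}K(S_0)$, an elliptic curve over $K(S_0)$, and observe that every $C_\alpha$ produced above is a torsor for this same $E$. (Unlike the section case, $\Pic^0(S_0/\PP^1)$ is no longer $S_0$ itself, but its generic fibre is still an elliptic curve over $K(\PP^1)$, and that is all we need.)

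The main obstacle is the input of the Lemma, and specifically making sure the coprimality hypothesis is exactly what is needed to force fineness \emph{and} non-emptiness of $\Pic^d_\alpha(S_0/\PP^1)$ simultaneously. The delicate points there — already handled in the proof of the Lemma — are: (i) the twisted degree $d$ is only well-defined after choosing a lift of $\alpha$ to the special Brauer group $\SBr(S_0)$, a subtlety we may suppress as in \cite{HuyMa}; (ii) the elementary transformation along a curve $C$ with $(C.f)=m$ changes $\chi(F\otimes E^\ast)$ by $m$, while killing a point of $F$ changes it by $|\alpha|$, so $\gcd(m,|\alpha|)=1$ is precisely the condition guaranteeing one can reach $\chi=1$; and (iii) the existence of the initial $\alpha$-twisted locally free sheaf of rank $|\alpha|$ relies on de Jong's theorem \cite{dJ} for surfaces. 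Granting the Lemma, the remaining work is bookkeeping: checking that the generic fibre of the (fine) relative twisted Picard scheme is a smooth genus one curve and that the torsor structure over $E$ is the one coming from tensor product. I expect no genuine difficulty beyond organising these already-established ingredients, so the proof of the Proposition itself is essentially the observation that the construction of Proposition \ref{prop:warmup} goes through verbatim once the Lemma supplies fineness.
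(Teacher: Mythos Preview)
Your proposal is correct and follows essentially the same approach as the paper: the proposition is stated with a \qedsymbol\ because it is simply the outcome of the preceding discussion, namely the Lemma ensuring fineness plus the rank-one universal family argument from the section case, together with the observation that all the $C_\alpha$ are torsors for the base change of $\Pic^0$ of the generic fibre of $S_0\to\PP^1$. One small notational slip: you write ``the generic fibre of $\Pic^d_\alpha(S_0/\PP^1)\to S_0$'', but there is no such morphism---you mean the second projection of the fibre product $\Pic^d_\alpha(S_0/\PP^1)\times_{\PP^1}S_0\to S_0$, which is clear from your surrounding text.
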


\subsection{Families of elliptic curves on arbitrary K3 surfaces}\label{sec:genK3}
Every K3 surface admits a covering family of elliptic curves. Does this mean that Brauer classes on arbitrary K3 surfaces are split by curves of genus one? Unfortunately, our approach fails in this generality, but something can still be said.\smallskip

For any K3 surface $S$ there exists a one-dimensional smooth
family of curves of genus one $\kc\to T$ with a dominant map
$\kc\twoheadrightarrow S$. Following the same strategy as in the last two sections, one considers $\Pic_\alpha^d(\kc/T)\times_T\kc\to\kc\to S$.
As soon as $\Pic_\alpha^d(\kc/T)$ is a fine moduli space, which can again
be phrased as a numerical condition on $|\alpha|$ being coprime
to a fixed integer $m$, there exists a universal family of $\alpha$-twisted
sheaves on $\Pic_\alpha^d(\kc/T)\times_T\kc$. This, as before, implies
that the pull-back of $\alpha$ is trivial. Now, $$\Pic_\alpha^d(\kc/T)\times_T\kc\to \kc$$ is a family of curves of genus one, generically a torsor
for $\Pic^0(\kc/T)\times_T\kc\to\kc$.

The difference to the case of elliptic K3 surfaces, with or without a section,
is that the projection $\kc\to S$ is typically not birational. In other words, there often
exists more than one elliptic curve passing through a generic point in $S$. Thus, by this method, one only proves the following.

\begin{prop}\label{prop:singell}
Let $S$ be a complex projective K3 surface. Then there exists
an integer $m>0$, a finite extension $K'/K(S_0)$, and an elliptic curve $E$ over
$K'$ such that for every Brauer class $\alpha\in \Br(S)$ of order $|\alpha|$ coprime to $m$, one finds a torsor $C_\alpha$ for $E$ over $K'$
such that under $\Br(S)\to\Br(K(S))\to \Br(K')\to \Br(C_\alpha)$ the image
of $\alpha$ is trivial.\qed
\end{prop}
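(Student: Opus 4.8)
The plan is to turn the sketch preceding the statement into a genuine argument; no new idea beyond Sections~\ref{sec:ellK3} and \ref{sec:nosect} is needed, the only new feature being that the covering family of elliptic curves on a general K3 surface is typically not birational onto $S$, which is what forces the finite extension $K'$. Concretely, I would first fix a one-dimensional smooth family $\kc\to T$ of genus one curves together with a dominant morphism $\pi\colon\kc\twoheadrightarrow S$, which exists on every complex projective K3 surface. Since $\kc$ is a surface and $\pi$ is dominant, $\pi$ is generically finite, so $K'\coloneqq K(\kc)$ is a finite extension of $K(S)$; I set $E$ to be the generic fibre of $\Pic^0(\kc/T)\times_T\kc\to\kc$, an elliptic curve over $K'$ that is manifestly independent of $\alpha$, since $\kc\to T$ and hence its relative Jacobian $\Pic^0(\kc/T)$ are.

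Next, after choosing a lift of $\alpha$ to the special Brauer group (cf.\ Section~\ref{sec:nosect}), I would reproduce the fineness argument of the Lemma in Section~\ref{sec:nosect}, with the pair $(S_0, S_0\to\PP^1)$ replaced by $(\kc, \kc\to T)$: starting from a twisted line bundle $F$ supported on a smooth fibre, hence a point of some non-empty $\Pic^d_\alpha(\kc/T)$, and an $\alpha$-twisted locally free sheaf $G$ of rank $|\alpha|$ available by de Jong's theorem \cite{dJ}, and then performing elementary transformations of $G$ along a fixed multisection of $\kc\to T$ and of $F$ along points of a fibre, one reaches a degree $d\in\QQ$ with $\Pic^d_\alpha(\kc/T)$ non-empty and $\chi(F\otimes G^\ast)=1$, provided $|\alpha|$ is coprime to a fixed integer $m$ that depends only on the geometry of $\kc\to T$ (roughly its multisection index). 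Then $\Pic^d_\alpha(\kc/T)$ is a fine moduli space of $\alpha$-twisted sheaves (cf.\ \cite{HuyMa}), carrying a universal sheaf $\kp$ on $\Pic^d_\alpha(\kc/T)\times_T\kc$ which is twisted with respect to the pull-back of $\alpha$ along the projection to $S$ and of rank one along its support.

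Finally, I would observe that $\kp$ is locally free of rank one over a dense open subset, so the pull-back of $\alpha$ to $\Pic^d_\alpha(\kc/T)\times_T\kc$ is trivial; passing to the generic point of $\kc$, the generic fibre $C_\alpha$ of $\Pic^d_\alpha(\kc/T)\times_T\kc\to\kc$ is a smooth genus one curve over $K'$, a torsor for $E$ because $\Pic^d_\alpha(\kc/T)\to T$ is generically a torsor under $\Pic^0(\kc/T)\to T$, and the image of $\alpha$ under $\Br(S)\to\Br(K(S))\to\Br(K')\to\Br(C_\alpha)$ vanishes. The only point requiring real care---and the one genuine difference from the earlier propositions---is that $m$ has to be fixed before $\alpha$ is chosen; this is harmless, since $m$ depends only on $\kc\to T$, the rank $|\alpha|$ of the auxiliary twisted bundle being absorbed into the coprimality hypothesis exactly as in the Lemma, while $\pi$ being dominant makes $K(S)\hookrightarrow K'$ compatible with the Brauer-group pull-backs. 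One should also check that the moduli-theoretic inputs---de Jong's theorem and the existence of a twisted universal sheaf---apply to the auxiliary surface $\kc$ rather than to $S$, which is routine.
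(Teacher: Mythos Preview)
Your proposal is correct and follows exactly the approach the paper sketches in Section~\ref{sec:genK3}: fix a dominating one-dimensional family $\kc\to T$ of genus one curves, set $K'=K(\kc)$ and $E$ the generic fibre of $\Pic^0(\kc/T)\times_T\kc\to\kc$, then rerun the fineness argument of the Lemma in Section~\ref{sec:nosect} on the surface $\kc$ to produce, for $|\alpha|$ coprime to the multisection index $m$ of $\kc\to T$, a fine $\Pic^d_\alpha(\kc/T)$ whose universal rank-one twisted sheaf splits the pull-back of $\alpha$. The only additional care you rightly flag---that the inputs (de Jong's theorem, the universal twisted sheaf) are needed on $\kc$ rather than on $S$, and that $m$ depends only on $\kc\to T$---is indeed routine.
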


Note that any Brauer class splits under some finite extension of $K(S_0)$. So the interest of the last proposition stems from the fact that $K/K(S_0)$ is a fixed
finite extension that works for all Brauer classes (of order coprime to $m$).\smallskip

{
\begin{remark}\label{rem:gen1}
Apart from elliptic surfaces, we do not know of any other types of surfaces
for which covering families of (singular) elliptic curves have been studied,
but the techniques here would certainly also apply to those and prove splitting
of Brauer classes by genus one curves over a fixed finite extension of the function field. See also Remark \ref{rem:gen2}.
\end{remark}}

\subsection{Period--index problem for elliptic K3 surfaces with a section}
{We now outline the proof of Theorem \ref{thm:mainind} in the case of 
elliptic K3 surfaces with a section, which might again serve as an illustration of the argument in the general situation. Here, we use the language of twisted sheaves while in the actual proof in Section \ref{sec:proofInd} we employ sheaves over Azumaya algebras.\smallskip

As in Section \ref{sec:ellK3}, we assume that $\pi\colon S_0\to \PP^1$ is an elliptic K3 surface with a section and that $S\to\PP^1$ is the elliptic K3 surface corresponding to a fixed class $\alpha\in\Br(S_0)\cong\Sha(S_0/\PP^1)$.  
By definition of the Tate--{\v{S}}afarevi{\v{c}} group, $\Pic^0(S/\PP^1)\cong S_0$,
but the Poincar\'e bundle $\kp$  on the fibre product $S\times_{\PP^1}S_0$ only exists as  $(1\times\alpha)$-twisted invertible sheaf $\kp$. Via the second projection $p\colon S\times_{\PP^1}S_0\to S_0$, this fibre product is viewed (over a dense open subset of $S_0$) as a family of smooth curves of genus one (the smooth fibres of $S\to \PP^1$) and $\kp$ is a family $\kp|_{S_{\pi(x)}}$, $x\in S_0$, of untwisted line bundles of degree zero on these fibres. 

It is is known that the period $\per(\alpha)=|\alpha|$ coincides with the minimal fibre degree of invertible sheaves on $S\to\PP^1$, cf.\ \cite[Ch.\ 11.4]{HuyK3}.
Thus, there exists a line bundle $M$ on $S$ such that $\deg(M|_{S_t})=\per(\alpha)$. Hence,  the restrictions of $\kp\otimes(M\boxtimes \ko_{S_0})$  to the fibres of $p$ are invertible sheaves of degree $m$ and, therefore, the direct image $p_\ast\kp$ is a torsion free, $\alpha$-twisted sheaf of rank $\per(\alpha)$. This immediately
implies that $\ind(\alpha|_{K(X)})=\per(\alpha)$.\smallskip

This proves the following special case of de Jong's result \cite{dJ} for   elliptic K3 surfaces with a section.

\begin{prop} Assume $S_0\to\PP^1$ is an elliptic K3 surface with a section. Then
$$\per(\alpha)=\ind(\alpha)$$
for every class $\alpha\in \Br(S_0)\subset\Br(K(S_0))$.\qed
\end{prop}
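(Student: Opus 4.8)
The plan is to prove the two divisibilities $\per(\alpha)\mid\ind(\alpha)$ and $\ind(\alpha)\mid\per(\alpha)$ separately; since both the period and the index are preserved under the injection $\Br(S_0)\hookrightarrow\Br(K(S_0))$, it is harmless to regard $\alpha$ as a class over the function field. The first divisibility is the classical one, valid for any central simple algebra, so the content is the second. For this I would use that $\ind(\alpha)$ over $K(S_0)$ is the minimal positive rank of a locally free $\alpha$-twisted sheaf over $K(S_0)$, and in particular divides the rank of any such sheaf; hence it suffices to produce one locally free $\alpha$-twisted sheaf of rank $\per(\alpha)$ over $K(S_0)$. Together with $\per(\alpha)\mid\ind(\alpha)$ --- two positive integers each dividing the other --- this gives $\per(\alpha)=\ind(\alpha)$.

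To build such a sheaf I would exploit the genus one fibration attached to $\alpha$. Under the identification $\Br(S_0)\cong\Sha(S_0/\PP^1)$ the class corresponds to an elliptic K3 surface $S\to\PP^1$ with $\Pic^0(S/\PP^1)\cong S_0$, and the Poincar\'e sheaf on $S\times_{\PP^1}S_0$ exists as a $(1\times\alpha)$-twisted invertible sheaf $\kp$, of relative degree $0$ along the second projection $p$. Invoking the classical fact that $\per(\alpha)$ equals the minimal fibre degree of an invertible sheaf on $S\to\PP^1$, cf.\ \cite[Ch.\ 11.4]{HuyK3}, I would fix $M\in\Pic(S)$ with $\deg(M|_{S_t})=\per(\alpha)$ on the smooth fibres, so that $\kp\otimes(M\boxtimes\ko_{S_0})$ is again $(1\times\alpha)$-twisted, invertible, and of relative degree $\per(\alpha)$ along $p$. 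Restricting to the generic fibre $C_\alpha$ of $p$ --- the base change to $K(S_0)$ of the smooth genus one generic fibre of $S\to\PP^1$ --- trivialises the twist on the $S$-factor and leaves an $\alpha|_{K(S_0)}$-twisted invertible sheaf $L$ of degree $\per(\alpha)$ on $C_\alpha$. Equivalently, $p_\ast(\kp\otimes(M\boxtimes\ko_{S_0}))$ is a torsion free $\alpha$-twisted sheaf on $S_0$; its rank, and the dimension of $H^0(C_\alpha,L)$, can be computed after base change to $\overline{K(S_0)}$, where $\alpha$ trivialises, $C_\alpha$ becomes a smooth genus one curve, $L$ an honest line bundle of positive degree $\per(\alpha)$, and Riemann--Roch gives $h^0=\per(\alpha)$ with vanishing $H^1$. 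Restricting this torsion free sheaf to the generic point of $S_0$ yields the required locally free $\alpha$-twisted sheaf of rank $\per(\alpha)$ over $K(S_0)$.

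The step I expect to need the most care is the bookkeeping for twisted sheaves: that degree and $h^0$ of a twisted invertible sheaf on $C_\alpha$ behave as in the untwisted case, and that the twist surviving on $C_\alpha$ is exactly $\alpha|_{K(S_0)}$ and not an extra obstruction on the $S$-factor. Both are handled by passing to the algebraic closure, where every twist disappears and the computation collapses to Riemann--Roch on a genus one curve, with flat base change transporting the rank back down. The genuinely geometric ingredient is the equality of $\per(\alpha)$ with the minimal fibre degree on $S$; this is also where the hypothesis that $S_0\to\PP^1$ carries a section enters, as it is what makes $S_0\cong\Pic^0(S/\PP^1)$ and the twisted Poincar\'e sheaf $\kp$ available in the first place.
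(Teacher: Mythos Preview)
Your argument is correct and follows essentially the same route as the paper: construct the $(1\times\alpha)$-twisted Poincar\'e sheaf $\kp$ on $S\times_{\PP^1}S_0$, twist by $M\boxtimes\ko_{S_0}$ with $M$ of minimal fibre degree $\per(\alpha)$, and push forward along $p$ to obtain an $\alpha$-twisted torsion free sheaf of rank $\per(\alpha)$ on $S_0$. Your additional care with the Riemann--Roch computation on the genus one fibre and the base change to $\overline{K(S_0)}$ makes explicit a step the paper leaves implicit.
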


}
\section{Twisted relative Picard varieties}\label{sec:TwistedPic}
Before entering the proof of Theorem \ref{thm:main}, let us rephrase 
the result geometrically. The assertion is that there
exists a variety ${\bf A}\twoheadrightarrow X$ which generically is an abelian scheme over $X$ such that for every Brauer class $\alpha\in \Br(X)$ one finds a variety ${\bf B}_\alpha\twoheadrightarrow X$  generically
a torsor for ${\bf A}$ such that $$\Br(X)\to\Br({\bf B}_\alpha)$$ maps $\alpha$ to a trivial class on ${\bf B}_\alpha$.

The passage from Theorem \ref{thm:main} to the more geometric version is obtained by viewing $B_\alpha$ as the generic fibre of  ${\bf B}_\alpha$ dominating $X$.
In characteristic zero, we can assume that ${\bf B}_\alpha$ is smooth (locally factorial would be enough) and projective. Indeed, then the restriction map $\Br({\bf B}_\alpha)\to \Br(B_\alpha)$ to the generic fibre is injective. Otherwise, if ${\bf B}_\alpha$ cannot be resolved by a locally factorial variety, one  might have to restrict to a Zariski open neighbourhood of the generic fibre $B_\alpha$, so that ${\bf B}_\alpha$ would not necessarily be projective anymore.\smallskip

{After introducing the setup, Section \ref{sec:introgen} proves a version
of Theorem \ref{thm:main}, namely the existence of $A$ and $B_\alpha$
not over the function field of $X$, but over some universal extension of it obtained
by the family of complete intersection curves. In fact, the argument is explained
first under the additional assumption that the moduli spaces used in the proof are fine. In Section \ref{sec:finems}, we then show how to turn this assumption into a numerical condition on the period of the Brauer class and in Section \ref{sec:proof} how to avoid it altogether. Section \ref{sec:reducing} explains
how the universal family of all complete intersection curve is cut down to a family that dominates $X$ birationally.}
\subsection{Setting: Complete intersection curves}\label{sec:setup}
We now set the stage for the proof of Theorem \ref{thm:main}. So
we let $X$ be an integral projective variety of dimension $n$ over an algebraically closed field $k$.
Passing to its normalisation, we may assume from the start that $X$ itself is normal. In characteristic zero we can even assume that $X$ is smooth,
but we will not need this.  \smallskip

{By replacing $X$ by its blow-up in a smooth closed point $x\in X$ and, letting $Y$ be the exceptional divisor, we can assume that there exists one (smooth) hypersurface $Y\subset X_{\rm sm}$ to which all Azumaya algebras $\ka$ on $X$ restrict trivially or, even stronger, such that the Brauer group of $Y$ is trivial.}\smallskip

Next, we fix a very ample linear system $|h| \cong\PP^N_k$ on $X$
and produce a family of complete intersection curves
  $$\xymatrix@M=7pt{\ar@{->>}[d]~\kc\ar@{^(->}[r]&U\times X\ar[r]&X\\
U&&}$$
of smooth curves $\kc_t\subset X$ parametrised by an open dense subset $U$
of the Grassmannian ${\mathbb G}(n-2,|h|)={\rm Gr}(n-1,H^0(X,\ko(h)))$.  Later it will be convenient to assume that all
curves parametrised by $U$ are contained in the smooth part of $X$, for which we will need the normality of $X$. Also note that we may assume $\dim(X)\geq2$,
so that we really can talk about a family of curves contained in $X$. Indeed,
the Brauer group of a smooth projective curve over an algebraically closed field is trivial.

The relative Picard scheme $\Pic^0(\kc/U)\twoto U$ of the family will be central for
our discussion and eventually leads to the abelian variety $A$.

\subsection{Twisted relative Picard variety}\label{sec:introgen}
For any class $\alpha\in\SBr(X)$ in the special Brauer group $\SBr(X)$\footnote{The difference between the Brauer group $\Br(X)$ and the special Brauer
group $\SBr(X)$ is not essential for our discussion. Lifting a class in $\Br(X)$
to a class in $\SBr(X)$ is only needed to be able to properly talk about the twisted
Picard variety $\Pic^d_\alpha$. {In the language of gerbes, it amounts to lifting a $\GG_m$-gerbe of order $n$ to a $\mu_n$-gerbe.}}
and any $d\in \QQ$, we can consider the relative twisted  Picard variety, cf.\ \cite{HuyMa},
$$\xymatrix{\Pic^d_\alpha(\kc/U)\ar[r]& U,}$$
which, if not empty, is a torsor for the Picard scheme $\Pic^0(\kc/U)\to U$. Its pull-back under $\kc\twoto U$ provides us with a torsor $$\Pic^d_\alpha(\kc/U)\times_U\kc\to \kc
$$ for the
abelian scheme $\Pic^0(\kc/U)\times_U\kc\to\kc$. Altogether, we have the commutative diagram
\begin{eqnarray}\label{eqn:1stdiag}
\xymatrix{\Pic^d_\alpha(\kc/U)\times_U\kc\ar[d]\ar[r]& \kc\ar[d]\ar[r]&X\\
\Pic^d_\alpha(\kc/U)\ar[r]&U.&}
\end{eqnarray}

\smallskip

Let us spell this out a bit more. First pick an Azumaya algebra $\ka$ on $X$ representing the class $\alpha\in\SBr(X)$. Then $\Pic^d_\alpha(\kc/U)$ is an open subset of the
moduli space of stable $\ka$-modules $E$ on $X$ supported on complete
intersection curves $C=H_1\cap\ldots \cap H_{n-1}\subset X$ with $H_i\in |h|$.\smallskip

{

Moduli spaces of this type are a special case of  Simpson's
moduli spaces of $\Lambda$-modules, cf. \cite{Simpson}.
Alternative construction via  gerbes or Brauer--Severi varieties are due to
Lieblich \cite{LiebDuke} and Yoshioka \cite{Yosh}. There are some subtleties
in comparing the stability conditions, but for locally free
 $\alpha$-twisted sheaves  of rank one or, equivalently, $\ka$-modules
of rank $d(\ka)=\sqrt{\rk(\ka)}$ those can be safely ignored.\smallskip

These moduli spaces, e.g.\ of $\Lambda$-modules, are projective once
the Hilbert polynomial is chosen.  The Hilbert polynomial $P$ to be fixed that
would result in singling out $\Pic^d_\alpha(\kc/U)$ is $P(\ell)=d(\ka)\cdot (\ell\cdot (C.h)+d)+(1-g(C))$. Then for a closed point $[F]\in\Pic^d_\alpha(\kc/U)$ corresponding to an $\ka|_C$-module $F$ on a smooth curve $[C]\in U$ we have 
$\chi(F\otimes\ko(\ell h))=P(\ell)$ or, equivalently,  $\ch(F)=d(\ka) \cdot(1+
d\cdot[\text{pt}])$ for the Chern character of $F$ on $C$. 

\begin{remark} On a smooth projective variety, one could alternatively work
with twisted Chern characters on $X$ which one would then fix as $$\ch_\ka(E)=[C]+d\in (\CH^{n-1}(X)\oplus \CH^{n}(X))/_{\sim_{\text{num}}}\otimes\QQ.$$ For $k=\CC$ one can  just work with singular cohomology and fix $\ch_\ka(E)$ as a class in the cohomology $H^{2n-2}(X,\QQ)\oplus H^{2n}(X,\QQ)$. Recall from \cite{HuyMa} that the twisted Chern character $\ch_\ka(E)$ is defined as $\sqrt{\ch(\ka)}^{-1}\cdot\ch(E)$, which for sheaves concentrated on curves is just $d(\ka)^{-1}\cdot\ch(E)$.
\end{remark}}

Note that every $F$ parametrised by $\Pic^d_\alpha(\kc/U)$ is
a locally free $\ka$-module on some smooth complete intersection
curve $C$ with $\rk(F)=d(\ka)$. In particular, $\ka|_C\cong\kend(F)$.
In the language of twisted sheaves, $F$ would correspond to a line bundle
that is twisted with respect to the restriction of a cocycle representing $\alpha$.\smallskip

Assume that $d$ can be chosen such that $\Pic^d_\alpha(\kc/U)$ is a fine moduli space, i.e.\ that there exists a universal sheaf $\kp$ on $\kc\times_U\Pic^d_\alpha(\kc/U)$. 
By definition, $\kp$ is a module over the pull-back $\ka'$
of $\ka$ via the projection 
\begin{equation}\label{eqn:proj}
\xymatrix{\Pic^d_\alpha(\kc/U)\times_U\kc\ar@{->>}[r]&\kc\ar[r]&X.}
\end{equation}
The class $\alpha'\in \SBr(\kc\times_U\Pic^d_\alpha(\kc/U))$ of  $\ka'$
is then the pull-back of $\alpha$, i.e.\ its image
 under 
$$\xymatrix@R=8pt{\SBr(X)\ar[r]& \SBr(\kc)\ar[r]&\SBr(\Pic^d_\alpha(\kc/U)\times_U\kc).\\
\alpha\ar@{|->}[r]&\alpha_\kc\ar@{|->}[r]&\alpha'\phantom{GGGGGGGGG}}$$
We will use the same notation for the corresponding classes in the Brauer groups
$\Br(X)$, $\Br(\kc)$, and $\Br(\Pic^d_\alpha(\kc/U)\times_U\kc)$.

  Note that the restriction of $\kp$ to $[F]\times C\subset \Pic^d_\alpha(\kc/U)\times_U\kc$ is the sheaf $F$ on $C$. Thus, $\kp$ is a locally
free $\ka'$-module of rank $d(\ka')$ and, therefore, the natural map
 $\ka'\congpf\kend(\kp)$ is an isomorphism. Hence, the image
 $\alpha'=[\ka']\in \Br(\Pic^d_\alpha(\kc/U)\times_U\kc)$  of $\alpha\in\Br(X)$ is trivial.\smallskip
 
 In other words, the pull-back $\alpha_\kc\in \Br(\kc)$ is split by 
 $\Pic_\alpha^d(\kc/|U|)\times_U\kc\to \kc$ which is a torsor for the abelian scheme $\Pic^0(\kc/U)\times_U\kc\to\kc$.
Equivalently, if $\eta_\kc\in\kc$ and $\eta_X\in X$ denote the generic points of $\kc$
and $X$, and so in particular $\eta_\kc$ maps to $\eta_X$ under the projection $\kc\to X$, the composition
$$\Br(X)\to \Br(K(X))=\Br(k(\eta_X))\to \Br(k(\eta_\kc))\to \Br(B_\alpha^d)$$ sends $\alpha$ to the trivial class. Here, $B_\alpha^d$ is the generic fibre of $\Pic^d_\alpha(\kc/U)\times_U\kc\twoheadrightarrow\kc$ which can also be seen as the base change of the generic
fibre of $\Pic_\alpha^d(\kc/U)\to U$ to $\eta_\kc$. Thus, (\ref{eqn:1stdiag}) is complemented by
$$\xymatrix@R=13pt{B_\alpha^d\ar@{}[d]|-{\bigcap}\ar[r]&\eta_\kc\ar@{}[d]|-{\rotatebox[origin=c]{270}{$\in$}}\ar@{|->}[r]&\eta_X\ar@{}[d]|-{\rotatebox[origin=c]{270}{$\in$}}\\
\Pic_\alpha^d(\kc/U)\times_U\kc\ar[r]&\kc\ar[r]&X.}$$
Note that the field
extension $K(X)\subset k(\eta_C)$ is independent of $\alpha$.
\subsection{Reducing the family of curves}\label{sec:reducing}
Even under the assumption that there exists a universal $\ka'$-module $\kp$, our discussion does not yet prove Theorem \ref{thm:main}, because $B_\alpha^d$ is a variety over the function field $k(\eta_\kc)$ of $\kc$ and not over $K(X)$. This will be addressed now.\smallskip

{
The very ample linear system $|h|$ embeds $X$ into a projective space
$\PP^N$. We pick a generic linear subspace $P\coloneqq \PP^{N-n}\subset \PP^N$, where
as before $n=\dim(X)$, and project from $P\subset \PP^{N}$ onto a generic
$\PP^{n-1}$. This defines a rational map $\xymatrix{\PP^N\ar@{-->}[r]&\PP^{n-1}}$, which is resolved by the blow-up in $P\subset\PP^{N}$ to a morphism
${\rm Bl}_{P}(\PP^N)\to\PP^{n-1}$. Restricting to $X$, we obtain a morphism
\begin{equation}\label{eqn:blow}
{\rm Bl}_{\{x_i\}}(X)\to\PP^{n-1}
\end{equation} from the blow-up of $X$ in the points of intersections $\{x_1,\ldots,x_d\}=X\cap P$, where $d=\deg(X)$. The fibre of (\ref{eqn:blow})
over a point $t\in\PP^{n-1}$ is the complete intersection curve
$X\cap \overline{Pt}$. Restricting to those points with smooth fibres
leads to
\begin{equation}\label{eqn:fambirat}
\xymatrix@R=20pt{\kc_0\ar[d]\ar@{^(->}[r]&{\rm Bl}_{\{x_i\}}(X)\ar[d]\ar@{->>}[r]&X\\
U_0\ar@{^(->}[r]& \PP^{n-1}.&}
\end{equation}

In more invariant terms, the situation can
be described by $P=\PP(W)\subset\PP^N=\PP(V)$ and $\PP^{n-1}=\PP(W_0)$, with $V=H^0(X,\ko(h))^\ast$
of dimension $N+1$,  a generic linear subspace $W\subset V$ of dimension $N-n+1$,  and a subspace $W_0\subset V$ of dimension $n$ for which the projection $W_0\,\hookrightarrow V\twoto V/W$ is an isomorphism. Mapping a point
$t=[v]\in \PP(W_0)$ to the subspace of all sections $s\in V^\ast=H^0(X,\ko(h))$
vanishing on the subspace spanned by $W$ and $v\in V$ defines an embedding
$$\PP^{n-1}=\PP(W_0)\,\hookrightarrow {\mathbb G}(n-1,|h|).$$ 
We can think of the family $\kc_0\to U_0\subset\PP(W_0)$ as the pull-back of $\kc\to U\subset{\mathbb G}(n-1,|h|)$.

What has been achieved by this construction is that the family 
(\ref{eqn:fambirat}) of curves in $X$ dominates $X$ birationally, i.e.\ there exists exactly one curve $\kc_t$ passing through the generic point of $X$. In particular,
the function fields of $\kc_0$ and $X$ can be identified: $K(\kc_0)=k(\eta_{\kc_0})=k(\eta_X)=K(X)$. Another feature of the construction is that the projection $\kc_0\to U_0$ admits a section. Indeed,  each of the exceptional divisors $E_i\subset {\rm Bl}_{\{x_i\}}(X)$ over the point $x_i\in X$
defines one.\smallskip
}

{\begin{remark}
The idea to use families of complete intersection curves to study Brauer classes is not new. For example, Colliot-Th\'el\`ene  \cite[Lem.\ 1]{CT} proves 
the existence of (\ref{eqn:fambirat})  and Lieblich \cite[Lem.\ 4.2.1.1]{LiebComp}
shows how to relax the assumption on the ground field. \end{remark}}

The restriction of (\ref{eqn:proj}) to $\kc_0$, which is nothing but
\begin{equation}\label{eqn:proj2}\Pic^d_\alpha(\kc_0/U_0)
\times_{U_0}\kc_0\to\kc_0\to X,
\end{equation} can now
be viewed as a torsor for the abelian scheme 
\begin{equation}\label{eqn:proj3}\Pic^0(\kc_0/U_0)\times_{U_0}\kc_0\to \kc_0.
\end{equation}
Since the projection $\kc_0\to X$ is a birational morphism, the generic fibres  of (\ref{eqn:proj2}) and of (\ref{eqn:proj3}), which we denote
by   $B_\alpha$ resp.\  $A$, can both be viewed
as varieties over $k(\eta_X)=K(X)$.

The rest of the argument is as before. The restriction $\kp_0$ of the universal sheaf
$\kp$ on $\Pic^d_\alpha(\kc/U)\times_U\kc$ to the subscheme
$\Pic^d_\alpha(\kc_0/U_0)\times_{U_0}\kc_0\subset \Pic^d_\alpha(\kc/U)\times_U\kc$
is locally free of rank $d(\ka)$ and a module over the pull-back of the Azumaya algebra representing
the Brauer class $\alpha$ under (\ref{eqn:proj2}).

\subsection{Reducing to fine moduli spaces}\label{sec:finems}
The next step is to ensure that for a given class $\alpha$ 
we can choose a rational number $d\in \QQ$ such that $\Pic^d_\alpha(\kc/U)$ is non-empty and fine. 
It is easy to find a $d$ for which the twisted Picard variety is not empty, but the
existence of a universal family will be possible only under the additional assumption that $|\alpha|$ and the top self-intersection number $m\coloneqq (h^{n-1}.Y)$ are coprime (an assumption that we will get rid of eventually in the next section). The idea is the same as in Section \ref{sec:nosect} for elliptic K3 surfaces without a section.\smallskip

For the non-emptiness consider any smooth curve $C$ parametrised by
$U$ and the restriction $\ka|_C$ of the Azumaya algebra $\ka$. Since $\Br(C)$ is trivial, there exists a locally free sheaf $F$ on $C$ with an isomorphism of algebras $\ka\cong\kend(F)$. In particular, $F$ can be viewed as an $\ka$-module with $\ch(F)=r\cdot [C]+\deg(F)$ with $r^2={\rk(\ka)}$ and, hence, $\ch_\ka(F)=[C]+d$, where $d=\deg(F)/r$.
In other words, $F$ defines a point in $\Pic^d_\alpha(\kc/U)$ over the point
$[C]\in U$ and so $\Pic_\alpha^d(\kc/U)$ is not empty.\smallskip

Let us now turn to the existence of the universal family. As in Section \ref{sec:nosect}, it is more convenient to use the language of twisted sheaves.
We first remark that  there is a more flexible 
version of the criterion for the existence of a universal family than the one already used in Section \ref{sec:nosect}: A universal sheaf $\kp$ on $\Pic_\alpha^d(\kc/U)\times_U\kc$, twisted with respect to the pull-back of $\alpha$, exists if one finds {a formal linear combination $u=\sum a_i[G^i]$ of $\alpha$-twisted
sheaves  on $X_{\rm sm}$ (or a class in the corresponding Grothendieck group) 
with $\chi(F\otimes u)=\sum a_i\cdot\chi(F\otimes G^i)=1$ for some $F\in \Pic^d_\alpha(\kc/U)$.} Here we are using that we can assume that all our curves $C$ parametrised by $U$ are contained in the smooth part of $X$.

{Start with any $u$. Then, switching from a twisted line bundle $F$ on a curve $C$ in $U$ to the kernel $F'$ of some surjection $F\twoheadrightarrow k(x)$ results in $\chi(F'\otimes u)=\chi(F\otimes u)-\rk(u)$. Note that in the process $d$ is changed to $d-(1/r)$. Also note that the minimal rank of an $\alpha$-twisted locally free sheaf $E$ on $X$, the index of $\alpha$, has the same prime factors as $|\alpha|$, cf.\ \cite[Thm.\ 6]{AntWil}.

Next we explain how to change $u$. For this consider any invertible sheaf $\kl$ on
the hypersuface $Y\subset X_{\rm sm}$ for which $\Br(X)\to\Br(Y)$ is trivial, see the begining of Section \ref{sec:setup}. Then, as $\alpha|_Y$ is trivial, $\kl$ can be considered as an $\alpha|_Y$-twisted sheaf on $Y$ and its push-forward $i_\ast\kl$ as an $\alpha$-twisted sheaf on $X$. Since $Y\subset X$ is contained in the smooth locus of $X$, there exists a finite locally free resolution 
$G^\bullet\to i_*\kl$ of $i_\ast\kl$
by locally free $\alpha$-twisted sheaves $G^i$, cf. \cite{Cald}. Then
$$\chi(F\otimes u-\sum (-1)^i[G^i])=\chi(F\otimes u)-\chi(F\otimes i_\ast \kl)=
\chi(F\otimes u)-(C.Y)$$
for any locally free sheaf $F$ concentrated on a curve $C$ intersecting $Y$ transversally. Observe that $(C.Y)$ is just the degree of the hypersurface $Y$, i.e.\ $m\coloneqq (h^{n-1}.Y)$.\smallskip

Now, under the assumption that $|\alpha|$ and $m$ are coprime, we can repeat the procedure changing $F$ and $u$ and eventually
produce $F$ and $u$ as needed. Note that the original $d$ might have changed in the process but always such that $\Pic^d_\alpha(\kc/U)$ is still non-empty. }


\subsection{Removing the coprimality assumption}\label{sec:proof}
So far we have proved Theorem \ref{thm:main} for all Brauer classes
$\alpha\in \Br(X)$ such that the order $|\alpha|$ is coprime to the top intersection number $m=(h^{n-1}.Y)$, where
$|h|$ is a very ample linear system on $X$. We now show how to
avoid this numerical assumption by passing to a blow-up of $X$
and thus conclude the proof of Theorem \ref{thm:main}.\smallskip

{Fix a smooth point $x\in Y\subset X$ in the distinguished hypersurface $Y$. Then consider the blow-up $\sigma\colon X'\coloneqq{\rm Bl}_x(X)\to X$ together with the strict transform $Y'\subset X'$  of $Y$ which is the blow-up $Y'={\rm Bl}_x(Y)\to Y$ and which has the property that $\Br(X')\to \Br(Y')$ is trivial. 

If we denote by $E$ the exceptional divisor of $\sigma$, then
$|h'\coloneqq a\cdot \sigma^\ast (h)-E|$ is a very ample linear system on $X'$ for $a\gg0$.\TBC{This is rather clear by first doing this for $\PP^n$. Do we say more? There is a paper
by Ballico and Coppens but I would rather not cite this.}
The degree of $Y'$ with respect to this very ample linear system is
$$m'\coloneqq (h'^{n-1}.Y')=a^{n-1}(h^{n-1}.Y)+(-1)^{n-1}(E|_{Y'})^{n-1}=a^{n-1}\cdot m\pm 1,$$
for $E|_{Y'}$ is the exceptional divisor of $\sigma|_{Y'}\colon Y'\to Y$.
Hence, $m=(h^{n-1}.Y)$ and $m'=(h'^{n-1}.Y')$ are coprime.}\smallskip

According to our previous discussion, there exist two abelian varieties
$A$ and $A'$ over $K(X)=K(X')$, such that every $\alpha\in \Br(X)$ 
with $(|\alpha|,m)=1$ is split by an $A$-torsor $B_\alpha$ and every $\alpha'\in
\Br(X')$ with $(|\alpha'|,m')=1$ is split by an $A'$-torsor $B'_{\alpha'}$.

Since $\Br(X)\cong\Br(X')$, this can be applied as follows.
We write any  given class $\gamma\in \Br(X)\cong\Br(X')$ as a product
$$\gamma=\alpha\cdot\alpha'$$
with $(|\alpha|,m)=1$ and $(|\alpha'|,m')=1$.
Then $\gamma$ is split by the product $B_\alpha\times B_{\alpha'}$ which is
a torsor for the abelian variety $A\times A'$.

This concludes the proof of Theorem \ref{thm:main}.

\subsection{Comments on the assumptions} Our construction is geometric and we need our ground field to
be algebraically closed. For example, even in the untwisted version, a Poincar\'e line bundle may not exist if $k$ is not algebraically closed. More geometrically,
if $X$ is a smooth projective curve over a field which is not algebraically closed,
then $\Br(X)$ might be non-trivial but the construction of system of curves does
not make sense.

As our arguments use the existence of the (twisted) Picard variety  of curves contained in the variety $X$, it seems unlikely that the ideas could be extended to 
also cover ramified classes in $\Br(K(X))$, i.e.\ those that are not contained in
$\Br(X)$.

\TBC{Gebhard's remark that we actually construct a map $\Br\to WC(A)$. It is not totally obvious as the degree $d$ seems to depend on $\alpha$.}

{\begin{remark}\label{rem:gen2} As a continuation of 
the discussion in Section \ref{sec:genK3} and especially Remark \ref{rem:gen1},
we note that the techniques above can be modified to prove similar results
for varieties $X$ with a dominating family of curves $\kc\to X$, potentially of much lower genus than the complete intersection curves used above. In the vain
of Proposition \ref{prop:singell}, one would then prove splitting of Brauer classes
over torsors for abelian varieties $A$ over some finite extension $K'/K(X)$ of the function field, but again with a certain condition on the period of the Brauer class.
\end{remark}}


\section{Period--index problem}\label{sec:proofInd}

This section contains the proof of Theorem \ref{thm:mainind}.
As in the previous section, we can assume that there exists a
smooth hypersurface $Y\subset X$ such that $\Br(X)\to \Br(Y)$
is trivial or, in fact, that $\Br(Y)$ is trivial.

\subsection{Bounding the index of central simple algebras}\label{sec:alg}
The main argument to prove Theorem \ref{thm:mainind} can
be given using twisted sheaves, sheaves on Brauer--Severi varieties, or
sheaves on gerbes. We chose to present it in the language of Azumaya algebras,
as it makes the argument most transparent. We begin by recalling the following basic fact.

Assume $A$ is an Azumaya algebra over a field $K$ and $V$ is a module over $A$. Then $$(\ind(A)\cdot d(A))\mid \dim_K(V).$$ Indeed, writing
$A\cong M_\ell(D)$ and using Morita equivalence, we know that
$V$ is of the form $W^{\oplus \ell}$ for some module $W$ over the division algebra $D$.
Since any $D$-module is free, this proves $V\cong D^{\oplus \ell \cdot m}$ for some $m$ and, therefore, $\ind(A)\cdot d(A)=\dim_K(D)^{1/2}\cdot \dim_K(D)^{1/2}\cdot \ell$
divides $\dim_K(V)=\dim_K(D)\cdot (\ell\cdot m)$.

So, in order to prove Theorem \ref{thm:mainind}, it suffices to find for each
Azumaya algebra $\ka$ on $X$  an $A$-module $V_\ka$  with $\dim_K(V_\ka)=d(A)\cdot\per(A)^e$ for a certain fixed $e$. Here,  $A=\ka_{K}$
denotes the Azumaya algebra over the function field $K=K(X)$ obtained as the generic fibre of $\ka$. \smallskip

The basic idea is to produce such an $A$-module $V$,
cf.\ (\ref{eqn:defV}), as the space of global sections 
\begin{equation}\label{eqn:theta}
H^0(\Pic_\alpha^d(\kc_0/U_0)_K,\kp|_{\Pic_\alpha^d(\kc_0/U_0)\times {\eta_{\kc_0}}}\otimes M)
\end{equation}
of the restriction of the universal sheaf $\kp$ twisted by an appropriate line bundle $M$ and to control its dimension. {So, ultimately $V$ is a space of theta functions
on a twisted Picard variety.} Since $\kp$ is a 
sheaf of modules over the pull-back of $\ka$, its space of global sections is 
indeed an $A$-module.

{\begin{remark} The above can also be phrased in more geometric terms.
Assume $E$ is a locally free sheaf on  $X$ and a module over the Azumaya
algebra $\ka$.  If $\rk(E)=d(\ka)$, then the natural injection $\ka\,\hookrightarrow \kend(E)$ is generically an isomorphism. Moreover, since both sheaves have
trivial determinant, it is an isomorphism in codimension one and hence,
at least when $X$ is smooth, everywhere. In particular, the class of $\ka$ is trivial. 

\end{remark}}

{The basic idea to construct modules over Azumaya algebras (or twisted sheaves) of small rank to bound the index has been exploited before by Lieblich \cite[Prop.\ 4.2.1.3]{LiebComp}. Instead of trying to produce rational points of moduli spaces to produce those, we propose here to use spaces of global sections of
certain twisted sheaves.}

\subsection{Direct image of the twisted Poincar\'e bundle} With the notation of the last section, we consider a birationally dominating family of very ample complete intersection curves 
$$\xymatrix{\kc_0\subset{\rm Bl}_{\{x_i\}}\ar@{->>}[r]&X}$$ parametrised by an open subset $U_0\subset \PP^{n-1}$.\smallskip

As in Section \ref{sec:finems}, we shall first assume that for some $d$
there exists a universal sheaf  $\kp$ on $\Pic^d_\alpha(\kc_0/U_0)\times_{U_0}\kc_0$ over $U_0$. If $\Pic^d_\alpha(\kc_0/U_0)$ is viewed as
a moduli space of sheaves over the Azumaya algebra $\ka$ picked to represent
the class $\alpha\in  \SBro(X)$, then the universal sheaf $\kp$ is a module over
the pull-back of $\ka$ under the projection  $\Pic^d_\alpha(\kc_0/U_0)\times_{U_0}\kc_0\to \kc_0\to X$. As always, the universal sheaf is only unique up to twist coming from $\Pic^d_\alpha(\kc_0/U_0)$ and the first
step consists of twisting $\kp$ appropriately so that it parametrises
(the analogue of) degree zero line bundles on $\Pic_\alpha^d(\kc_0/U_0)$.
\smallskip

To simplify notation, let us restrict the situation to the generic point
$\xi\in U_0$. We denote the fibre of $\kc_0$ by $C$, a curve over $K_0\coloneqq k(\xi)=K(U_0)$, and set $$P\coloneqq \Pic_\alpha^d(\kc_0/U_0)_{\xi}=\Pic_{\alpha|_C}^d(C),$$
which is a torsor  for the abelian variety $\Pic^0(C)$ over $K_0$.\smallskip

{Intersecting $C$ with one of the exceptional divisors $E_i\subset{\rm Bl}_{\{x_i\}}(X)$ produces a closed point $x\in C$, which can also be viewed as the generic point of $E_i$. As $E_i\cap\kc_0$ is a section
of $\kc_0\to U_0$, the residue field of $x$ is $k(x)=K_0$.
Moreover, the pull-back $A_x$ of $\ka$, which is the pull-back of $\ka\otimes k(x_i)$
under the projection $E_i\to \{x_i\}$, is Morita trivial, as $k(x_i)\cong k$ is algebraically closed by assumption. Hence, $A_x\cong M_{d(A)}(K_0)$.\smallskip

The restriction $\kp|_{P\times\{x\}}$ of the universal sheaf can then be seen as a 
a locally free sheaf of modules on $P\cong P\times\{x\}$ over the Azumaya algebra $\ko_P\boxtimes A_x\cong M_{d(A)}(\ko_{P\times\{x\}})$ and is thus of the form $L^{\oplus d(A)}$ for some
invertible sheaf $L$ on $P\times\{x\}$. Thus, tensoring the twisted sheaf  $\kp$
on $P\times C$ by  the invertible sheaf $L^\ast\boxtimes\ko$ results in a universal sheaf
$\kp'$ on  $P\times C$ for which the restriction
to $P\times\{x\}$ is $\ko^{\oplus d(A)}$.}

If $M$ is any ample invertible sheaf on $P$, then for $i>0$ $$H^i(P\times\{x\},(\kp'\otimes(M\boxtimes \ko))|_{P\times\{x\}})=H^i(P\times\{x\},(M\boxtimes \ko)^{\oplus d(A)}|_{P\times\{x\}})=0,$$  {which by semi-continuity implies the same vanishing for the restriction to $P\times\{\eta\}$, where $\eta$ is the generic point of $C$, whose residue field is the function field $K=K(X)$.

The next step is to find an appropriate $M$ for which $\chi(P\times\{\eta\},(\kp'\otimes
(M\boxtimes \ko))|_{P\times\{\eta\}})$ }can be bounded. For this we consider the natural morphism
\begin{equation}\label{eqn:finitemorph}
P=\Pic_{\alpha|_C}^d(C)\to
\Pic^D_{\alpha^{\per(\alpha)}|_C}(C)
\end{equation}
given by tensor product (over $\ka|_C$) $E\mapsto E^{\otimes \per(\alpha)}$,
where $D=\per(\alpha)\cdot d$. As $\alpha^{\per(\alpha)}=1$, the twisted Picard variety on the right hand side
is actually untwisted, i.e.\ isomorphic $\Pic^a(C)$ for a certain $a$. Since {$C(K_0)\ne\emptyset$, 
the latter is isomorphic to $\Pic^0(C)$ and admits an ample invertible sheaf  $\ko(\Theta)$ satisfying $(\Theta)^{g(C)}=g!$, i.e.\ an invertible sheaf inducing a principal polarisation.

Thus, (\ref{eqn:finitemorph}) induces a finite surjective morphism $$\varphi\colon P\twoto \Pic^0(C)$$
of degree $\per(\alpha)^{2g(C)}$ and the pull-back $M\coloneqq\varphi^\ast\ko(\Theta)$
is ample with  top intersection number $(M)^{g(C)}= (\Theta)^{g(C)}\cdot\deg(\varphi)=g!\cdot\per(\alpha)^{2g(C)}$. For this $M$ we then have
by Riemann--Roch and standard vanishing results for abelian varieties
\begin{eqnarray*}&&\dim_{K} H^0(P\times\{\eta\},(\kp'\otimes
(M\boxtimes \ko))|_{P\times\{\eta\}})
=h^0(P\times\{\eta\},(\kp'\otimes
(M\boxtimes \ko))|_{P\times\{\eta\}})\\
&=&\chi(P\times\{\eta\},(\kp'\otimes
(M\boxtimes \ko))|_{P\times\{\eta\}})
=\chi(P\times\{x\},(\kp'\otimes(M\boxtimes \ko))|_{P\times\{x\}})\\
&=&\chi(P\times\{x\},(M\boxtimes \ko)^{\oplus d(A)}|_{P\times\{x\}})
=d(A)\cdot \per(\alpha)^{2g(C)}.
\end{eqnarray*}
Thus, the $A$-module {\begin{equation}\label{eqn:defV}V\coloneqq H^0(P\times\{\eta\},(\kp'\otimes
(M\boxtimes \ko))|_{P\times\{\eta\}})
\end{equation}} is of dimension
$$
\dim_K(V)= d(A)\cdot \per(\alpha)^{2g(C)},
$$ }which by the discussion in Section \ref{sec:alg}
 implies 
\begin{equation}\label{eqn:indper}
\ind(A)\mid\per(\alpha)^{2g(C)}.
\end{equation}

Hence, under the assumption of the existence of a universal family
this proves Theorem \ref{thm:mainind}.


\subsection{End of proof of Theorem \ref{thm:mainind}}\label{sec:End} As in Section \ref{sec:proof}, it remains to show how to argue
in the case that for a given $\alpha$, we cannot find a $d$ for which $\Pic_\alpha^d(\kc_0/U_0)$ is fine. We use the same notation and again decompose a Brauer class $\gamma\in \Br(X)$ as $\gamma=\alpha\cdot \alpha'$.
However, this time we pick the decomposition such that addition that $\per(\alpha)$ and $\per(\alpha')$ are coprime.
Then by the arguments above applied to $X$ and the blow-up $X'$, there exist integers $e$ and $e'$ such that
$\ind(\alpha)\mid\per(\alpha)^e$ and $\ind(\alpha')\mid\per(\alpha')^{e'}$.
This implies $$\ind(\gamma)\mid\ind(\alpha)\cdot\ind(\alpha')\mid
\per(\alpha)^e\cdot \per(\alpha')^{e'}\mid (\per(\alpha)\cdot \per(\alpha'))^{e+e'}=\per(\gamma)^{e+e'}.$$
This concludes the proof of Theorem \ref{thm:mainind}.

\end{document}